\newcommand{\marginnote}[1]{\ifthenelse{\isodd{\thepage}}{\normalmarginpar}
{\reversemarginpar}\marginpar{\fbox{\parbox{15mm}{\tt\sloppy\footnotesize #1}}}}
\newcommand{\qee}{ \hfill\hspace{2pt}$\triangle$}
\newtheorem{thm}{Theorem}[section]
\newtheorem{corol}[thm]{Corollary}
\newtheorem{lemma}[thm]{Lemma}
\newtheorem{prop}[thm]{Proposition}
\newtheorem{defin}[thm]{Definition}
\theoremstyle{remark}
\newtheorem{rema}[thm]{Remark}
\newtheorem{exe}[thm]{Example}
\newcommand{\alp}{{\alpha}}
\newcommand{\cO}{{\mathcal O}}
\newcommand{\PP}{{\mathbb P}}
\newcommand{\Ext}{{\rm Ext}}
\newcommand{\Hom}{{\rm Hom}}
\newcommand{\pr}{{\operatorname{pr}}}
\newcommand{\LL}{{\mathcal L}}
\newcommand{\LLL}{{\mathcal L}}
\newcommand{\F}{{\mathcal F}}
\newcommand{\FFF}{{\mathcal F}}
\newcommand{\E}{{\mathcal E}}
\newcommand{\EEE}{{\mathcal E}}
\newcommand{\G}{{\mathcal G}}
\newcommand{\K}{{\mathcal K}}
\newcommand{\Q}{{\mathcal Q}}
\newcommand{\QQ}{{\mathbb Q}}
\newcommand{\HH}{{\mathcal H}}
\newcommand{\C}{{\mathbb C}}
\newcommand{\ch}{\operatorname{ch}}
\newcommand{\id}{\operatorname{id}}
\newcommand{\im}{\operatorname{im}}
\renewcommand{\max}{{\operatorname{max}}}
\newcommand{\rk}{\operatorname{rk}}
\newcommand{\Quot}{{\operatorname{Quot}}}
\newcommand{\Spec}{{\operatorname{Spec}}}
\newcommand{\Tor}{{\operatorname{Tor}}}
\newcommand{\quot}{{\operatorname{Quot}(\HH,P_c)}}
\newcommand{\fquot}{{\operatorname{{\mathbb Q}uot}(\HH,P_c,\F)}}
\newcommand{\rar}{\rightarrow}
\newcommand{\dual}{{\scriptscriptstyle \vee}}
\newlength{\rrrr}
\newcommand{\isoto}{{\longrightarrow\hspace{-1.5 em}
\raisebox{ 0.6 ex}{$\textstyle\sim$}\hspace{0.8 em}}}
\definecolor{orange}{rgb}{1.0, 0.5, 0.0}
\definecolor{monbleu}{rgb}{0.25, 0.25, 0.75}
\begin{document}
\bigskip\bigskip
\title[Uhlenbeck-Donaldson compactification for framed sheaves]{Uhlenbeck-Donaldson compactification for \\[5pt] framed sheaves on projective surfaces}
\bigskip
\date{Revised version March 31st, 2013}
\subjclass[2000]{14D20; 14D21;14J60}
\keywords{Framed sheaves, moduli spaces,  Uhlenbeck-Donaldson compactification, stable pairs, instantons}
\thanks{This research was partly supported  by   {\sc prin}    ``Geometria delle variet\`a algebriche e dei loro spazi di moduli'', by Istituto Nazionale di Alta Matematica and by the grant VHSMOD-2009 No.  ANR-09-BLAN-0104-01.}
 \maketitle \thispagestyle{empty}
\begin{center}
{\sc Ugo Bruzzo}$^\ddag$, {\sc Dimitri Markushevich}$^\S$
and {\sc Alexander Tikhomirov}$^\P$
\\[10pt]  {\small
$\ddag$ Scuola Internazionale Superiore di Studi Avanzati, \\ Via Bonomea 265, 34136
Trieste, Italia \\ and Istituto Nazionale di Fisica Nucleare, Sezione di Trieste \\
{\tt bruzzo@sissa.it}
}
\\[10pt]  {\small
\S Math\'ematiques --- B\^at. M2, Universit\'e Lille 1, \\ F-59655 Villeneuve d'Ascq Cedex, France \\
{\tt  markushe@math.univ-lille1.fr} }
\\[10pt]  {\small
\P Department of Mathematics, Yaroslavl State Pedagogical University,\\  Respublikanskaya Str. 108,
150 000 Yaroslavl, Russia \\  {\tt astikhomirov@mail.ru}}
\end{center}

\bigskip\bigskip

\begin{abstract}
We construct a compactification  $M^{\mu ss}$ of the Uhlenbeck-Donaldson type
for the moduli space of slope stable framed bundles. This is
a kind of a moduli space of slope semistable framed sheaves. We show that there exists
a projective morphism $\gamma \colon M^{ss} \to M^{\mu ss}$, where
$M^{ss}$ is the moduli space of S-equivalence classes of Gieseker-semistable
framed sheaves. The space  $M^{\mu ss}$  has a natural set-theoretic stratification
which allows one, via a Hitchin-Kobayashi correspondence,  to compare it with the moduli spaces of framed ideal instantons.
  \end{abstract}
 \maketitle
 \markright{\sc U. BRUZZO, D. MARKUSHEVICH and A. TIKHOMIROV}
   \section{Introduction}
Let $X$ be a smooth complex projective surface, and let $M^\mu(c)$ be the moduli space of $\mu$-stable (also called
slope stable) locally-free coherent $\cO_X$-modules of numerical class $c$.
One can obtain a compactification
of $M^\mu(c)$ by taking closure in the Gieseker-Maruyama moduli
space space $M^{ss}(c)$, formed
by the S-equivalence classes of Gieseker-semistable coherent $\cO_X$-modules.
On the other hand, by the so-called
Hitchin-Kobayashi correspondence \cite{LuTel}, $M^\mu(c)$ may  be regarded
as a moduli space of bundles carrying a Hermitian-Yang-Mills metric;
as such, it admits a differential-geometric compactification,
called the Uhlenbeck-Donaldson compactification $N(c)$, which is obtained
by adding to $M^\mu(c)$ two types of degenerate objects: singular Hermitian-Yang-Mills metrics, called ``ideal instantons'', and reducible metrics, called ``parabolic ends''.
In a 1993 paper Jun Li showed that $N(c)$ may be given a structure of a scheme
over $\C$, and constructed a morphism $M^{ss}(c)\to N(c)$, which on $M^\mu(c)$
restricts to an isomorphism  \cite{JunLi}. With that scheme structure, $N(c)$ may be regarded
as a sort of moduli space of $\mu$-semistable sheaves, under an identification which is
somehow stronger than S-equivalence \cite{HLbook}.

In this paper we consider pairs formed by a bundle  on a smooth polarized projective surface, together with a framing.
A notion of stability exists for such objects, depending on an additional parameter $\delta$ which is a polynomial
with rational coefficients. This notion of stability gives rise to a GIT moduli space \cite{HL1,HL2}.
The main result of this paper is the construction of an Uhlenbeck-Donaldson compactification
$M^{\mu ss}(c,\delta)$ for the $\mu$-polystable part
$M^{\mu\mbox{\scriptsize -}\mathrm{poly}}(c,\delta)$
of this moduli space. This is accomplished by following rather closely the
construction of the Uhlenbeck-Donaldson compactification of
the moduli space of (unframed) vector bundles, as done, e.g., in \cite{HLbook}.
A first key ingredient is, as always, a boundedness result for the family
$\mathcal{S}^{\mu ss}(c,\delta)$
of $\mu$-semistable framed sheaves on $X$ (Proposition \ref{bounded}) with numerical class $c$.
After introducing an appropriate Quot scheme, this family is realized as a locally closed
subset $R^{\mu ss}(c,\delta)$ in the Quot scheme, and a suitable semiample line bundle on $R^{\mu ss}(c,\delta)$
is picked out. The moduli scheme $M^{\mu ss}(c,\delta)$ cannot be defined as a geometric quotient, hence it is defined
in an {\em ad hoc} way, cf.~Definition  \ref{def M-muss}. The Jordan-H\"older filtration allows one to introduce a
set-theoretic stratification in the space $M^{\mu ss}(c,\delta)$.

Let $X$ be a smooth projective surface,
$D$ a divisor  on $X$ satisfying some numerical conditions, and $\F$ a rank $r$ vector bundle
on $D$, which is semistable or satisfies a slightly more general
stability condition.
The following property was proved in \cite{DimaUgo}: given a
torsion-free rank $r$ sheaf $\E$  on $X$ and an isomorphism
$\phi\colon \E{\vert D}\to\F$, one
can choose a polarization $H$ in $X$ and a stability condition for framed
sheaves in such a way that the pair  $(\E,\phi)$
is stable in Huybrechts-Lehn's sense.
Moreover,
the choice of the polarization and that of the
stability condition only depend on the pair $(D,\F)$ and on the numerical class
of $\E$. This means that the moduli space of such pairs
embeds into a moduli space of stable pairs, and therefore we can restrict
the Uhlenbeck-Donaldson compactification to it. Via the Hitchin-Kobayashi correspondence, this allows
one to look at $M^{\mu ss}(c,\delta)$ as a {quasi-projective scheme structure
on the} compactified
moduli space of instantons.
{In the framed case,
a quasi-projective Uhlenbeck-Donaldson type compactification
has been previously known only
for $X=\PP^2$. It was constructed by Nakajima in} {\cite{Nak0}}
{by completely different techniques, using the ADHM
data and hyperk\"ahler quotients.}

\smallskip {\bf Acknowledgments.} This paper was mostly written during the first and third author's stays at Universit\'e de Lille I. They acknowledge the financial support and the warm hospitality. The authors also thank D.~E.~Diaconescu, D.~Huybrechts and F.~Sala for discussions, and the anonymous referee for useful remarks and suggestions.

 \bigskip

    \section{A Quot scheme for framed sheaves}\label{Quot}
  Let $X$ be a smooth $d$-dimensional projective variety  over an algebraically closed field $\Bbbk$ of characteristic zero, $H$ an ample class on it, $\F$ a
coherent sheaf on $X$, $c\in K(X)_{\mbox{\tiny num}}$
a numerical K-theory class, $P_c$ the corresponding Hilbert polynomial.
We  shall consider pairs $(\E,[\phi])$,  where $\E$ is a coherent sheaf on $X$
with Hilbert polynomial $P_\E=P_c$, and {$[\phi]\in \PP(\Hom(\E,\F)^*)$
is the proportionality class of nonzero sheaf morphism $\phi\colon\E\to\F$}.
We call each such pair $(\E,[\phi])$ a {\it framed sheaf}. Later on, to simplify notation, we shall write a framed sheaf as $(\E,\phi)$. {A homomorphism between two framed sheaves
$(\E_1,[\phi_1])$, $(\E_2,[\phi_2])$ is a sheaf homomorphism $f:\E_1\rar\E_2$
such that $\phi_2f=\lambda \phi_1$ for some $\lambda\in k$. An isomorphism
is an invertible homomorphism.}
At some stage, when we consider (semi)stability of framed sheaves, also the choice of a
polynomial $\delta$ will come into play.

Let $V$ be a vector space of dimension $P_c(m)$ for some $m\gg 0$,
let $\HH=V\otimes\cO_X(-m)$, and let $\quot$ be the Quot scheme parametrizing
the coherent quotients of $\HH$ with Hilbert polynomial $P_c$. On
$\quot\times X$ there is a universal quotient $\tilde\Q$, and a morphism
$$\cO_{\quot} \boxtimes\HH  \stackrel{\tilde g} {\to}\tilde\Q$$
Let $\PP=\PP[\Hom(V,H^0(X,\F(m)))]^\ast$;  the points of $\PP$ are in a one-to-to correspondence
with morphisms $ \HH\to\mathcal{F}$ up to
a constant factor.
Let $Y:=\fquot$ be the closed subscheme of $\quot\times\PP$ formed
by the pairs $([g],[a])$ such that there is {a} morphism $\phi\colon \G\to\F$
for which the diagram
$$\xymatrix{\HH \ar[r]^g\ar[dr]_a & \G \ar[d]^\phi \\ & \F}$$
commutes.
{Obviously, such $\phi$ is uniquely determined by $a$.}
We denote by $\Q$ the restriction to $Y\times X$ of the pullback
of $\tilde\Q$ to $\quot\times\PP\times X$.
There is
a line bundle $\LL_Y$ on $Y$ and
a morphism
$\Phi\colon\Q\otimes p^*\LL_Y\to q^*\F$,
where $Y\overset{p}\leftarrow Y\times X\overset{q}\to X$ are the natural projections.

For a given scheme $S$ let $S\xleftarrow{\pr_1} S\times X\xrightarrow{\pr_2} X$ be the projections.
\begin{defin}\label{relfram}
A family $\mathbf{F}=(\mathbf{E},\mathbf{L},\alpha_{\mathbf{E}})$
(or, shortly, $\mathbf{F}=(\mathbf{E},\alpha_\mathbf{E})$) of framed sheaves on $X$ parametrized by a scheme $S$
is a  sheaf\ \ $\mathbf{E}$ on $S\times X$, flat over $S$,
a line bundle $\mathbf{L}$ on $S$, and a subbundle morphism
$\alpha_{\mathbf{E}}:\mathbf{L}\to \mathrm{pr}_{1*}\mathcal{H}om(\mathbf{E},\mathcal{O}_S\boxtimes\mathcal{F})$
called a framing of $\mathbf{E}$.
Two families
$(\mathbf{E},\mathbf{L},\alpha_\mathbf{E})$ and $(\mathbf{E}',\mathbf{L}',\alpha_{\mathbf{E}'})$
are called isomorphic if there exist isomorphisms
$g:\mathbf{E}\isoto\mathbf{E}'$ and  $h:\mathbf{L}\isoto\mathbf{L}'$
such that $\alpha_{\mathbf{E}}\cdot\tilde{g}=\alpha_{\mathbf{E}'}\cdot h$, where
$\tilde{g}:\mathrm{pr}_{1*}\mathcal{H}om(\mathbf{E},\mathcal{O}_S\boxtimes\mathcal{F})\to
\mathrm{pr}_{1*}\mathcal{H}om(\mathbf{E}',\mathcal{O}_S\boxtimes\mathcal{F})$
is the isomorphism induced by $g$.
\end{defin}
We may look at a framing
$\alpha_{\mathbf{E}}:\mathbf{L}\to \mathrm{pr}_{1*}\mathcal{H}om(\mathbf{E},\mathcal{O}_S\boxtimes\mathcal{F})$
as a nowhere vanishing morphism
$\widetilde{\alpha}_{\mathbf{E}}:\mathrm{pr}_1^*\mathbf{L}\otimes\mathbf{E}\to\mathcal{O}_S\boxtimes\mathcal{F}$,
defined as the composition
$\mathrm{pr}_1^*\mathbf{L}\otimes\mathbf{E}\xrightarrow{\mathrm{pr}_1^*\alpha_{\mathbf{E}}}
\mathrm{pr}_1^*\mathrm{pr}_{1*}\mathcal{H}om(\mathbf{E},\mathcal{O}_S\boxtimes\mathcal{F})\otimes\mathbf{E}
\overset{ev}\to\mathcal{H}om(\mathbf{E},\mathcal{O}_S\boxtimes\mathcal{F})\otimes\mathbf{E}\xrightarrow{can}
\mathcal{O}_S\boxtimes\mathcal{F}$.



\begin{defin}
Let $(\E,[\phi])$ be a framed sheaf on $X$.
A pair $(\G,[\psi])$ is a quotient of $(\E,[\phi])$ if $\G$ is a quotient
of $\E$, and the diagram
$$\xymatrix{ \E \ar[r]\ar[dr]_\phi & \G\ar[d]^\psi \\ &\F}$$
commutes modulo a scalar factor.

 If $(\E,[\phi])$ is a
framed sheaf on $X$, a family of framed quotients of $(\E,[\phi])$ is a family
of framed sheaves $(\G,\LL,\Psi)$ over a scheme $S$ with a sheaf epimorphism
$g\colon \operatorname{pr}_2^\ast\E \to \G$,
a line bundle $\LL$ on $S$
and a section $s\in \Gamma(S, \LL^\dual)$, such that the diagram
$$\xymatrix{  \mathrm{pr}_1^*\LL\otimes\operatorname{pr}_2^\ast\E \ar[rrrr]^{\mathrm{pr}_1^*\LL\otimes g}
\ar[drr]_{ \operatorname{pr}_1^\ast s\,\otimes\,\id}&&&& \mathrm{pr}_1^*\LL\otimes \G\ar[dd]^{\Psi} \\
 &&\operatorname{pr}_2^\ast\E \ar[drr]_{ \operatorname{pr}_2^\ast\phi}  && \\
&&&&\operatorname{pr}_2^\ast\F}$$
commutes.
\end{defin}

The universality property of the Quot scheme implies the following result.

\begin{prop} \label{univprop}Let $(\G,\Psi)$ be a   family of  framed quotients of
$\HH$, parametrized by a scheme $S$.
Assume that the Hilbert polynomial of $\G_s=:\G\otimes\Bbbk(s)$ is $P_c$
for any $s\in S$. Then there is a morphism $f\colon S \to \fquot$  (unique up
to a unique  isomorphism) such that $(\G,\Psi)$ is
isomorphic to $(f\times \operatorname{id})^\ast(\Q,\Phi)$ over $S$.
\end{prop}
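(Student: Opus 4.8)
The plan is to split the datum of a framed quotient into its two independent pieces --- the quotient sheaf and the framing --- treat each by the relevant universal property (that of $\quot$ and that of $\PP$), and then check that the resulting morphism to $\quot\times\PP$ factors through the closed subscheme $Y=\fquot$. Throughout, $\G$ is flat over $S$ and has fibrewise Hilbert polynomial $P_c$, as required.

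First I would recover the quotient. The epimorphism $g\colon\pr_2^\ast\HH\to\G$ is a flat family over $S$ of quotients of $\HH$ with constant Hilbert polynomial $P_c$, so the universal property of $\quot$ gives a unique morphism $f_1\colon S\to\quot$ and an isomorphism $(\G,g)\cong(f_1\times\id)^\ast(\tilde\Q,\tilde g)$ over $S\times X$. Next I would recover the framing. Composing the (nowhere-vanishing) framing $\Psi$ with $g$ gives $\beta=\Psi\circ(\pr_1^\ast\LL\otimes g)\colon\pr_1^\ast\LL\otimes\pr_2^\ast\HH\to\pr_2^\ast\F$, an $\LL$-twisted family of morphisms $\HH\to\F$; this is the framing the family induces on $\HH$, and the section $s$ together with the lower triangle of the defining diagram records its compatibility and the line-bundle twist. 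Applying $\pr_{1\ast}(-\otimes\pr_2^\ast\cO_X(m))$ and using $\HH(m)=V\otimes\cO_X$ together with cohomology and base change for $m\gg0$, I turn $\beta$ into $\bar\beta\colon\LL\to\Hom(V,H^0(X,\F(m)))\otimes\cO_S$. Since each $\Psi_t$ is nonzero and each $g_t$ surjective, $\beta_t\neq0$, hence $\bar\beta$ is a subbundle inclusion; by the universal property of $\PP$ it yields a unique $f_2\colon S\to\PP$ together with an identification of $\LL$ with the pullback of the tautological line bundle.

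Then I would form $f=(f_1,f_2)\colon S\to\quot\times\PP$ and show it factors through $Y$; this is the step I expect to be the main obstacle. By construction the pulled-back universal morphism $\HH\to\F$ on $S$ is exactly $\beta$, and $\beta$ factors through the pulled-back universal quotient $g$ --- namely as $\Psi\circ(\pr_1^\ast\LL\otimes g)$, with $\Psi$ providing the factoring map on $\G$. Since $Y$ is by construction the locus in $\quot\times\PP$ over which the universal framing of $\HH$ factors through the universal quotient, promoting this factorization --- valid over $S$ and encoded in the commutativity of the hypothesized diagram relating $\Psi$, $g$, $s$ and $\phi$ --- to a scheme-theoretic factorization of $f$ through the closed subscheme $Y$ is the delicate point, and is where the precise scheme structure on $Y$ as the universal factorization locus must be used.

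Finally, pulling $(\Q,\LL_Y,\Phi)$ back along $f\times\id$ recovers $(\G,\Psi)$: the isomorphism $(f\times\id)^\ast\Q\cong\G$ comes from $f_1$, the identification $f^\ast\LL_Y\cong\LL$ from $f_2$, and $(f\times\id)^\ast\Phi$ matches $\Psi$ under these, producing the required isomorphism of families in the sense of Definition \ref{relfram}. Uniqueness of $f$ is inherited from the uniqueness in the two universal properties, $f_1$ being determined by $(\G,g)$ and $f_2$ by the induced framing, so that any two choices differ by the unique isomorphism of Definition \ref{relfram}.
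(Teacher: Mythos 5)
Your proposal is correct and is essentially the argument the paper intends: the paper offers no written proof beyond the assertion that ``the universality property of the Quot scheme implies the following result,'' and your decomposition into the universal property of $\quot$ (recovering $f_1$ from $g$), the universal property of $\PP$ (recovering $f_2$ from the pushed-forward, nowhere-vanishing $\bar\beta$), followed by factorization of $(f_1,f_2)$ through the closed subscheme $Y=\fquot$, is exactly that implicit argument. The ``delicate point'' you flag is genuine but is resolved precisely as you indicate: the paper's $Y$ must be read as carrying the scheme structure of the universal factorization locus, i.e.\ the vanishing scheme of the universal framing restricted to the kernel of the universal quotient (which exists as a closed subscheme since $q^\ast\F$ is flat over $\quot\times\PP$), and with that reading your scheme-theoretic factorization and the uniqueness statement follow at once.
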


{{The action of $\operatorname{SL}(V)$ on $V$ induces well-defined actions on $\quot$ and $\PP$ which
are compatible, so that one has an action of $\operatorname{SL}(V)$ on $Y=\fquot$}}. The moduli space of
semistable framed sheaves is constructed as the GIT quotient of $Y$ by this action
of~$\operatorname{SL}(V)$~\cite{HL1}.

For the reader's convenience, and basically following \cite[Ch.~8]{HLbook},
we briefly recall the construction of determinant line bundles on the Quot scheme.
For a scheme $Z$, we shall denote by $K(Z)$ and $K^0(Z)$  the Grothendieck
groups of coherent    and of  locally-free $\mathcal O_Z$-modules respectively.
Let $X$ be a smooth projective variety,  and let $\E$ be a flat family of coherent
sheaves on $X$ parametrized by a scheme $S$. Note that $\E$ singles out a well defined
class $[\E]$ in  $K^0(S\times X)$. If $p$ and $q$ are the projections onto the
two factors of $S\times X$, one defines a morphism $\lambda_{\E}\colon K(X)\to\operatorname{Pic}(S)$
by letting
\begin{equation}\label{det1}
\lambda_\E(u) = \operatorname{det} p_! \,(q^\ast (u) \cdot [ \E] )
\end{equation}
where $p_!$ is the (well defined) morphism $K^0(S\times X)\to K^0(S)$ induced by $p$. Later on
we shall use the line bundle $\lambda_{\widetilde{\Q}}(u_1)$ on $\quot$, where $\widetilde{\Q}$ is the universal
quotient on $\quot\times X$, and
\begin{equation}\label{det2}
u_i = u_i(c) = - r \cdot h^i + \chi (c\cdot h^i)\cdot [\mathcal O_x]\,.
\end{equation}
Here $r=\operatorname{rk}(\widetilde{\Q})$, $h$ is the class of $\mathcal O_X(1)$ in $K(X)$,
and $x$ is a fixed point of $X$.

\bigskip
\section{A family  of $\mu$-semistable framed sheaves on a surface} \label{family}
From now on we are assuming that the framing sheaf is supported on a divisor. Let $(X,H)$ be
a smooth projective variety of dimension $d\geq 1$ with an ample divisor $H$, $D\subset X$ an effective divisor, and $\mathcal{F}$ an $\mathcal{O}_D$-module of dimension $d-1$. We shall only consider   nontorsion framed sheaves, so we assume $\deg P_c(m)=d$. Let us fix a polynomial $\delta \in\mathbb{Q}[m]$ of degree $d-1$ with positive leading coefficient $\delta_{d-1}$.
For a framed sheaf $(E,\alpha:E\to\mathcal{F})$ of rank $\rk E>0$, denote $$\deg E:=c_1(E)\cdot H^{d-1}\,,\quad
\mu(E)=\deg E/\rk E\,,$$
$$
\deg (E,\alpha):=\deg E-\varepsilon(\alpha)\delta_{d-1}\,,\quad
\mu(E,\alpha):=\deg (E,\alpha)/\rk E,
$$
where $\varepsilon(\alpha):=1$ if $\alpha\ne0$
and  $\varepsilon(\alpha):=0$ otherwise.
Recall that a framed sheaf $(E,\alpha:E\to\mathcal{F})\in Y$ is called $\mu$-{\it (semi)stable
with respect to  $\delta_{d-1}$} in the sense of Huybrechts-Lehn \cite[Def. 1.8]{HL1} if $\ker \alpha$ is torsion free, and for all framed
subsheaves
$(E',\alpha')$ of $(E,\alpha)$, where $0\le\rk(E')\leq\rk E$ and
$\alpha':E'\hookrightarrow E\overset{\alpha}\to\mathcal{F}$ is the induced framing, one has
$$\rk E'\cdot\deg (E,\alpha)- \rk E\cdot\deg (E',\alpha')\underset{(\ge)}>0\,.$$ (If
$\rk E'>0$,   the latter inequality can be written as
$\mu(E',\alpha')\underset{(\le)}<\mu(E,\alpha)$.)

These (semi)stability notions for framed sheaves behave very much like the usual notions for coherent sheaves.
In particular,
\begin{enumerate}\item
the usual implications between the various notions of (semi)stability and $\mu$-(semi\-stability) hold also in the framed case, cf.~Section 1 of \cite{HL1}:
$$\mu\mbox{-stable}\quad \Rightarrow \quad \mbox{stable}
\quad \Rightarrow \quad\mbox{semistable}\quad \Rightarrow\quad\mu\mbox{-semistable}.$$
So, if we denote by $\mathcal{S}^{ss}(c,\delta)$ and $\mathcal{S}^{\mu ss}(c,\delta_{d-1})$ the families of all
framed  sheaves $(E,\alpha)$ of class $c$ on $X$, with $\alpha\ne 0$,  that are
semistable with respect to the polynomial $\delta$ and
$\mu$-semistable with respect to
$\delta_{d-1}$ (shortly: {\it $\mu$-semistable}), respectively,  one has the inclusion
\begin{equation}\label{ss in mu ss}
\mathcal{S}^{ss}(c,\delta)\subset\mathcal{S}^{\mu ss}(c,\delta_{d-1});
\end{equation}
\item there are restriction theorems of the Mehta-Ramanathan type \cite{Sala1};
\item   ($\mu$-)semistability is an open condition; in Proposition \ref{openness} below we prove
this for $\mu$-semistability.
\item The family of $\mu$-semistable framed sheaves with fixed numerical data is bounded, see Proposition \ref{bounded}.
\end{enumerate}

\begin{prop}\label{openness}
Let $(X,H)$, $D$, $\F$ be as above. Let $S$ be a noetherian scheme, and $\mathcal E$ a sheaf on $S\times X$ which is flat over $S$ and is of relative dimension $d=\dim X$ over $S$. Let $ \LL$ be an invertible sheaf on $S$, and $\alpha:
 \LL\otimes \mathcal E\to \mathcal O_S\boxtimes \F$ a framing of $\mathcal E$ as in Definition \ref{relfram}. Let us fix some rational number $\delta_{d-1} >0$.

Then the locus of points $s\in S$ for which $(\mathcal E_s,\alpha_s=\alpha|_{\{s\}\times X})$ is $\mu$-semistable with respect to $\delta_{d-1}$ is open.
\end{prop}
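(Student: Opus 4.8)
The plan is to prove openness of $\mu$-semistability by showing that its failure is a closed condition, detected by the existence of a destabilizing framed subsheaf whose numerical invariants live in a bounded set. I would follow the standard semicontinuity/relative-Quot strategy used for ordinary $\mu$-semistability, adapting it to the framed setting. The starting point is the numerical characterization of the destabilizing data: a point $s \in S$ fails to be $\mu$-semistable precisely when there is a saturated framed subsheaf $(E', \alpha') \hookrightarrow (\mathcal{E}_s, \alpha_s)$ with $0 < \rk E' < \rk \mathcal{E}_s$ violating the inequality $\rk E' \cdot \deg(\mathcal{E}_s, \alpha_s) - \rk \mathcal{E}_s \cdot \deg(E', \alpha') > 0$. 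The key subtlety relative to the unframed case is the correction term $\varepsilon(\alpha')\delta_{d-1}$ in $\deg(E', \alpha')$: whether the induced framing $\alpha' = \alpha_s|_{E'}$ vanishes or not depends on whether $E'$ lands in $\ker\alpha_s$, so one must track this dichotomy.

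First I would establish boundedness of the potential destabilizing subsheaves. By restricting attention to the slopes $\mu(E')$ that can occur for a subsheaf violating the framed inequality, one sees that the candidate $E'$ have $\mu$-slope bounded above (the framing correction shifts the bound by at most $\delta_{d-1}/\rk E'$, a controllable amount), and of course $0 < \rk E' < \rk \mathcal{E}_s$. A Grothendieck-type boundedness result then guarantees that the family of all such saturated subsheaves $E' \subset \mathcal{E}_s$, over all $s \in S$, is bounded. Consequently these subsheaves are parametrized by finitely many components of a relative Quot scheme $\Quot(\mathcal{E}/S \times X, \ast)$ over $S$, say $\pi\colon \widetilde{Q} \to S$, which is a projective morphism.

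Next I would use this relative Quot scheme to convert the destabilizing condition into a closed condition on $S$. On $\widetilde{Q}$ the ranks and degrees (hence the $\mu$-slopes) of the universal subsheaf and quotient are locally constant, so each component carries fixed numerical invariants; the value $\varepsilon(\alpha')$ is likewise determined on each component by whether the composite with $\alpha$ vanishes identically there, which is itself a closed condition. On the (finitely many) components where the framed inequality $\rk E' \cdot \deg(\mathcal{E}_s, \alpha_s) - \rk\mathcal{E}_s\cdot\deg(E',\alpha') \ge 0$ can fail in the strict $>$ sense, the locus in $\widetilde{Q}$ where it actually fails is open and closed (being numerically determined per component), and its image under the proper map $\pi$ is closed in $S$. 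The non-$\mu$-semistable locus is the finite union of these images, hence closed; one also needs that the torsion-freeness requirement on $\ker\alpha_s$ is an open condition on $s$, which follows from the standard semicontinuity of the dimension of the torsion subsheaf in a flat family. Taking complements gives the desired openness.

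The main obstacle I expect is the careful bookkeeping of the framing correction $\varepsilon(\alpha')$, since this is exactly what distinguishes the framed case from the classical one. Controlling it requires stratifying the relative Quot scheme according to whether the induced framing on the universal subsheaf is zero or nonzero, and verifying that both the vanishing locus and its complement are open-and-closed on each component so that the numerical inequality has a well-defined constant character there. Once this stratification is in place, the remaining arguments (boundedness of slopes, properness of $\pi$, closedness of the image) are routine adaptations of the unframed proof, for which I would invoke the relative Quot scheme machinery and Grothendieck's boundedness theorem as in the standard references.
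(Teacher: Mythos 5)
Your overall strategy---cut the destabilizing data down to finitely many numerical types by boundedness, realize them in relative Quot schemes over $S$, and use properness of $\Quot_{S\times X/S}(\E,P'')\to S$ to get closed images---is the same as the paper's, but there are two genuine gaps. The first is your restriction to destabilizers with $0<\rk E'<\rk \E_s$. Framed $\mu$-semistability imposes the inequality for \emph{all} framed subsheaves with $0\le\rk E'\le\rk E$, and in the framed setting full-rank destabilizers genuinely occur: since $\F$ is supported on the divisor $D$, the subsheaf $\ker\alpha_s$ has full rank $r$, its induced framing is zero, and for $E'=\ker\alpha_s$ the quantity $\rk E'\deg(\E_s,\alpha_s)-\rk\E_s\deg(E',\alpha')$ equals $r\bigl(\deg(\im\alpha_s)-\delta_{d-1}\bigr)$, which is negative exactly when the degree ($a_{d-1}$) of $\im\alpha_s$ is smaller than $\delta_{d-1}$. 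This destabilizing condition has no unframed analogue and is invisible to your Quot schemes of intermediate-rank quotients of $\E$; the paper handles it by a separate mechanism (the closed set $S_*$), using the finitely many Hilbert polynomials of the quotients $\F/\im\alpha_s$ and the projectivity of the Quot schemes $\Quot_X(\F,P_i)$ of the \emph{framing} sheaf. Rank-zero (torsion) destabilizers, e.g.\ a torsion subsheaf $T\not\subset\ker\alpha_s$ with $\deg T>\delta_{d-1}$, are likewise excluded by your dichotomy and are not covered by your separate remark on torsion-freeness of $\ker\alpha_s$. As written, your locus $Z$ is too small, so its complement is strictly larger than the $\mu$-semistable locus.

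The second gap is the $\varepsilon(\alpha')$ bookkeeping, which you correctly identify as the crux but resolve incorrectly. It is false that vanishing of the induced framing is ``determined on each component'' of the relative Quot scheme: the locus of quotients $[0\to F'\to\E_s\to F''\to 0]$ with $F'\subset\ker\alpha_s$ is a \emph{closed subscheme} of $Q(P'')$, not a union of connected components, and its complement is open but not closed. Hence on a component whose numerical invariants destabilize only when $\varepsilon(\alpha')=1$, your destabilizing locus is an open subset, and the image of an open set under the proper map $\pi$ need not be closed---the argument breaks exactly at the step you called routine. The paper's fix is an implication, not local constancy: the $\varepsilon=1$ destabilizing inequality implies the $\varepsilon=0$ one (in the paper's notation, a polynomial in $A_1$ arising from a point with $F'\subset\ker\alpha_s$ automatically lies in $A_2$), so every point of such a component, including the closed stratum, is destabilizing, and one may replace $S^\circ(P'')$ by the full image $S(P'')$, which is closed. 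Two smaller points: destabilizing subsheaves have slope bounded \emph{below} (equivalently, torsion-free quotients have slope bounded above)---that, not an upper bound on $\mu(E')$, is the hypothesis Grothendieck's lemma needs; and the appeal to semicontinuity for torsion-freeness of $\ker\alpha_s$ is shaky because the kernels $\ker\alpha_s$ do not form a flat family over $S$.
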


\begin{proof}
We argue along the lines of the proof of Proposition 2.3.1 in
\cite{HLbook}. Let $P=P(\E_s)$ be the Hilbert polynomial of the sheaves $\E_s=:\E|_{\{s\}\times X}$.
By $\hat\mu (P)$ we denote the hat-slope of a polynomial, $\hat\mu (P)=a_{d-1}/a_d$ if $P(n)=a_d\frac{n^d}{d!}+a_{d-1}\frac{n^{d-1}}{(d-1)!}
+\ldots +a_0$, $a_d\neq 0$. For a sheaf $E$, we denote $a_i(E)$ the coefficient $a_i$ in the above representation of the Hilbert polynomial $P=P(E)$, and $\hat\mu(E):= \hat\mu (P(E))$. The hat slope of a $d$-dimensional sheaf is related to the usual slope
by the formula $\hat\mu=\frac1{\deg X}\left(\mu-\frac12 K_X\cdot H^{d-1}\right)$.

Consider the exact triples
\begin{equation}\label{F'F''}
0\to F'\to\E_s\to F'' \to 0
\end{equation}
over all $s\in S$.
In the case when $F'\subset \ker\alpha_s$, we can also associate with \eqref{F'F''} the exact triple
\begin{equation}\label{F'F''a}
0\to F'\to\ker\alpha_s \to F_*'' \to 0\ .
\end{equation}
If the sheaf $F'$ in one of the triples \eqref{F'F''} or \eqref{F'F''a} destabilizes $\E_s$,
then we can replace it by its framed saturation, and it will still destabilize $\E_s$. Recall that
the framed saturation of $F'\subset \E_s$ is the saturation in $\ker\alpha_s$ if $F'\subset \ker\alpha_s$,
and the saturation in $\E_s$ if $F'\not\subset \ker\alpha_s$.

Consider first the case when $F'$ is a saturated destabilizing subsheaf such that $\dim F''<d$. Then we have $F'=\ker\alpha_s$,
$F''\simeq\im\alpha_s$, and $a_{d-1}(\im\alpha_s)<\delta_{d-1}$, or equivalently
$a_{d-1}(\F/\im\alpha_s)>a_{d-1}(\F)-\delta_{d-1}$. The set of the Hilbert polynomials $P_i$ of the
quotients $\F/\im\alpha_s$ as $s$ runs over $S$ is finite, and the Quot schemes $\Quot_{X}(\F, P_i)$ are projective, so the locus
\begin{multline*}
S_i=\{s\in S\ | \ \mbox{there exists $i$ such that $a_{d-1}(P_i)>a_{d-1}(\F)-\delta_{d-1}$ and}\
P(\F/\im\alpha_s)=P_i\}
\end{multline*}
is closed. Thus the locus
$$
S_*=\bigcup_i S_i
$$
of points of $S$ over which $(\E_s,\alpha_s)$ has a $\mu$-destabilizing subsheaf with torsion quotient $F''$ is closed.

Consider now the two sets of Hilbert polynomials of quotients $F''$ corresponding to the framed saturated destabilizing subsheaves for which $\dim F'' =d$:
\begin{multline*}
A_1 =\{P''\ | \  \deg P''=d,\  \hat\mu (P'')< \hat\mu (P)-
\tfrac{\delta_{d-1}}{r\deg X},\
\mbox{and there exist $s\in S$ and an exact}\\
\mbox{triple \eqref{F'F''} with $F''$ torsion free, such that $P(F'')=P''$}\},
\end{multline*}
\begin{multline*}
A_2 =\{P''\ | \  \deg P''=d,\  \hat\mu (P'')< \hat\mu (P) +\tfrac{\delta_{d-1}}{\deg X}(\tfrac{1}{r''}-\tfrac1r),\
\mbox{and there exist $s\in S$ and} \\
F'\subset \ker\alpha_s\  \mbox{such that $F_*''$ is torsion free of rank $r''$ and $P(F'')=P''$}\}.
\end{multline*}
As the families of sheaves $\{\E_s\}_{s\in S}$ and $\{\ker\alpha_s\}_{s\in S}$ are bounded, these sets are finite  by \cite{Gro}, Lemma 2.5.

The relative Quot schemes
$\pi: Q(P'')=\Quot_{S\times X/S}(\E, P'')\to S$ are projective, so their images $S(P''):=\pi(Q(P''))$
are closed. Some of the points of $S(P'')$ may correspond to non-saturated destabilizing subsheaves. But
the hat-slope of the Hilbert polynomial of $F''$ may only decrease when we replace $F'$ by its framed saturation.
Hence a point in $S(P'')$ represented by a quotient $F''$ (or $F''_*$) with torsion for some polynomial $P''\in A_i$ is also represented by a {\em torsion-free} quotient $\tilde F''$ (or $\tilde F''_*$) from $Q(\tilde P'')$
for another polynomial $\tilde P''\in A_i$ with the same $i=1, 2$.
Hence the locus of points $s\in S$ for which
$\E_s$ is not $\mu$-semistable is the union
\begin{equation}\label{union}
Z\ \ = \ \ \left(\bigcup_{P''\in A_1} S^\circ(P'')\right)\ \bigcup\ \left(\bigcup_{P''\in A_2} S(P'')\right)\bigcup S_*.
\end{equation}
Here $ S^\circ(P'')= \pi(Q^\circ(P''))$, where $Q^\circ(P'')$ is the open subset of  $Q(P'')$ consisting
of the triples \eqref{F'F''} with $F'\not\subset\ker\alpha_s$. Remark that if $[0\to F'\to\E_s\to F'' \to 0]\in Q(P'')\setminus Q^\circ(P'')$ for some $P''\in A_1$, then $F'\subset \ker \alpha_s$ and
$$\hat\mu (P'')< \hat\mu (P)-
\tfrac{2\delta_1}{r\deg X} < \hat\mu (P) +\tfrac{2\delta_1}{\deg X}(\tfrac{1}{r''}-\tfrac1r)\,,$$
so that
$P''\in A_2$. Thus if we replace $ S^\circ(P'')$ by $ S(P'')$  in \eqref{union}, the union on the r.~h.~s. will not change, and we have
$$
Z=\bigcup_{P''\in A_1\cup A_2} S(P'')\ \ \bigcup \ S_*
$$
Thus $Z$ is a union of finitely many closed subsets of $S$, and is closed in $S$.
\end{proof}

\begin{prop}\label{bounded}
The family $\mathcal{S}^{\mu ss}(c,\delta)$ is bounded.
\end{prop}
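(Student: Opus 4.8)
The plan is to reduce boundedness of the framed family to the classical boundedness of its underlying sheaves, the only extra input being a uniform control of $\hat\mu_{\max}$ coming from the framed $\mu$-semistability. Since by convention the framing of a framed sheaf is nonzero, $\varepsilon(\alpha)=1$ for every $(E,\alpha)\in\mathcal{S}^{\mu ss}(c,\delta)$, and for any subsheaf $E'\subset E$ of rank $\rk E'>0$ with induced framing $\alpha'$ the defining inequality reads
$$\rk E'\,(\deg E-\delta_{d-1})-\rk E\,(\deg E'-\varepsilon(\alpha')\delta_{d-1})\ge 0.$$
Examining the two cases $E'\subset\ker\alpha$ (so $\varepsilon(\alpha')=0$) and $E'\not\subset\ker\alpha$ (so $\varepsilon(\alpha')=1$), this yields in both cases the uniform estimate
$$\mu(E')\le\mu(E)+\delta_{d-1}\Bigl(1-\tfrac{1}{\rk E}\Bigr),$$
and in fact $\mu(E')<\mu(E)$ when $E'\subset\ker\alpha$. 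As $\rk E=r$ and $\mu(E)=\deg E/r$ are determined by the fixed class $c$, the right-hand side is a constant $C_0=C_0(c,\delta_{d-1})$; translating through $\hat\mu=\frac1{\deg X}\left(\mu-\frac12K_X\cdot H^{d-1}\right)$ one obtains a uniform bound $\hat\mu_{\max}(E)\le C$ for the underlying sheaf of every member of the family.

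Next I would deduce genuine boundedness of the sheaves $E$ from this slope bound together with the fact that their Hilbert polynomial is the fixed $P_c$. Because $\ker\alpha$ is torsion free, the torsion $T(E)$ of $E$ meets $\ker\alpha$ trivially and hence embeds, via $\alpha$, into the fixed sheaf $\F$; since $\F$ has dimension $d-1$ and (in the cases considered) is pure, $T(E)$ is a pure $(d-1)$-dimensional subsheaf of $\F$, and the rank-zero instance of the inequality above forces $\deg T(E)\le\delta_{d-1}$. The family of such subsheaves of $\F$ of bounded degree is bounded, so only finitely many Hilbert polynomials $P(T(E))$ occur, whence $E^{tf}:=E/T(E)$ is a torsion-free sheaf of rank $r$ with Hilbert polynomial $P_c-P(T(E))$ in a finite set and with $\hat\mu_{\max}\le C$. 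The Grothendieck--Le Potier--Simpson boundedness theorem (in the form of \cite[Thm.~3.3.7]{HLbook}) then gives boundedness of $\{E^{tf}\}$, and boundedness of $\{E\}$ follows since each $E$ is an extension of a member of the bounded family $\{T(E)\}$ by a member of the bounded family $\{E^{tf}\}$, the relevant relative $\Ext^1$ being coherent over a parameter scheme of finite type.

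Finally, once $\{E\}$ is bounded there is an $m\gg0$ with every such $E$ being $m$-regular, so that $E$ is a quotient of $\HH=V\otimes\cO_X(-m)$ and the framing $\alpha\colon E\to\F$ is induced, up to a scalar, by an element of the fixed finite-dimensional space $\Hom(\HH,\F)$. Thus the pairs $(E,\alpha)$ are parametrized by a locally closed subscheme of $Y=\fquot$, a scheme of finite type, which is exactly the assertion that $\mathcal{S}^{\mu ss}(c,\delta)$ is bounded. The slope estimate of the first step is a routine calculation; the real work, and the main obstacle, lies in the second step, namely in passing from the $\hat\mu_{\max}$-bound to honest boundedness. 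This is delicate precisely because a bound on $\hat\mu_{\max}$ together with a fixed Hilbert polynomial does \emph{not} suffice for impure sheaves, and it is here that one must exploit the embedding of the torsion of $E$ into the fixed framing sheaf $\F$ in order to reduce to the purely $(d-1)$- and $d$-dimensional situations where Grothendieck's criterion applies.
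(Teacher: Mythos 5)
Your first step (the uniform slope bound from framed $\mu$-semistability) is correct, and your closing step (from boundedness of the sheaves to boundedness of the pairs via the Quot scheme and $\PP(\Hom(\HH,\F))$) is fine. The gap is exactly at the step you yourself call ``the real work''. You assert that, since $T(E)$ embeds into $\F$ with $\deg T(E)\le\delta_{d-1}$, ``the family of such subsheaves of $\F$ of bounded degree is bounded, so only finitely many Hilbert polynomials $P(T(E))$ occur.'' This principle is false: a degree bound controls only the top coefficients of the Hilbert polynomial of a subsheaf, not the lower ones. Concretely, for $d=2$ and $\F=\cO_D$ with $D$ a smooth curve, the subsheaves $\cO_D(-Z)\subset\cO_D$, as $Z$ runs over finite subschemes of $D$, are pure of dimension $1$ and all have the same degree $c_1\cdot H=D\cdot H$, yet $\chi(\cO_D(-Z)(m))=\chi(\cO_D(m))-\operatorname{length}(Z)$ is unbounded below, so they form an unbounded family with infinitely many Hilbert polynomials. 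Grothendieck's lemma, the tool you would need here, requires the degree of subsheaves to be bounded \emph{below} (equivalently, pure quotients of the same dimension with degree bounded above) and pertains to saturated subsheaves; your bound $\deg T(E)\le\delta_{d-1}$ is an upper bound on generally non-saturated subsheaves and gives nothing. Note also that you cannot appeal to boundedness of the $T(E)$'s as torsion parts of the family $\mathcal{S}^{\mu ss}(c,\delta)$ --- that is what is being proved. The gap is closable (e.g.\ one can bound $\chi(E/T(E))$ from above using that it is torsion free with finitely many possible pairs $(r,c_1)$ and bounded $\mu_{\max}$, whence $\chi(T(E))$ is bounded below), but that is genuine extra work absent from your proposal. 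A minor additional point: purity of $\F$ is not assumed in the paper's setup, so even the purity of $T(E)$ needs the caveat you only gesture at.

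The paper sidesteps the torsion problem entirely by the Huybrechts--Lehn trick: fix once and for all a surjection $\phi\colon\hat\F\to\F$ with $\hat\F$ locally free, and replace each $E$ by the fibre product $\hat E=E\times_\F\hat\F$. Since $\ker\alpha$ is torsion free, $\hat E$ is automatically torsion free, and it sits in an exact sequence $0\to\K\to\hat E\to E\to 0$ with $\K=\ker\phi$ \emph{fixed}, so $P_{\hat E}=P_c+P_{\K}$ is constant over the whole family --- no bound on the torsion of $E$ is ever needed. Framed $\mu$-semistability of $(E,\alpha)$ then gives a uniform bound on $\mu_{\max}(\hat E)$ (by splitting any $\hat F\subset\hat E$ into its part in $\K$ and its image in $E$), and Le Potier--Simpson together with Kleiman's criterion yields boundedness of $\{\hat E\}$, hence of the pairs. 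In short: instead of splitting off the torsion of $E$, which forces one to bound the torsion's Hilbert polynomial, one covers $E$ by a torsion-free sheaf with constant numerical invariants. If you want to salvage your decomposition approach, you must supply the missing lower bound on $\chi(T(E))$; as written, the argument does not go through.
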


\begin{proof}
The sheaves $E$ from the pairs $(E,\alpha)\in \mathcal{S}^{\mu ss}(c,\delta)$
may have torsion.
We use the following trick of Huybrechts--Lehn (Remark 1.9 and  Lemma 2.5 from \cite{HL1})
 to replace them with torsion-free ones.
Let $\hat\F$ be any locally-free sheaf with a surjection $\phi:\hat\F\to\F$
and $\hat E=E\times_\F\hat\F$. Then $\hat E$ is torsion free,
and there is an exact triple $0\to \K\to\hat E\stackrel{\phi_E}{\relbar\joinrel\longrightarrow}  E\to 0$, where $\phi_E$ is the morphism induced by $\phi$, and $\K=\ker\phi_E$.
Thus if we fix $\hat \F$ and $\phi$, then $P_{\hat E}=P_c+P_{\K}$
does not depend on $(E,\alpha)$.

Let now $\hat F$ be any nonzero subsheaf of $\hat E$. Then $\rk \hat F>0$, as
$\hat E$ is torsion free. We have an exact triple
$0\to \K_F\to\hat F\to F\to 0$, where $F=
\phi_E(\hat F)$ and $\K_F=\ker (\phi_E|_{\hat F})$.
By the $\mu$-semistability of $(E,\alpha)$, we have
$\deg (F)\le\rk F\cdot (\mu (E)+\delta_{d-1})$. Hence
$$
\mu (\hat F)=\frac{\deg F+\deg\K_F}{\rk \hat F}\le
\frac{\rk F\cdot(\mu_c+\delta_{d-1})+\rk\K_F\cdot\mu_{\max}(\K)}{\rk \hat F}\ ,
$$
where $\mu_{\max}$ stands for the slope of the maximal
destabilizing subsheaf.

This shows that $\mu_{\max}(\hat E)$ is uniformly
bounded as $(E,\alpha)$ runs through $\mathcal{S}^{\mu ss}(c,\delta)$.
Hence by a theorem of
Le Potier-Simpson \cite[Thm. 3.3.1]{HLbook}, there exist constants $C_0,\ldots,C_d$ and an
$\hat E$-regular sequence of hyperplane sections $H_1,\ldots,H_d\in|\mathcal{O}_X(H)|$ such that
$\ h^0(\hat E|_{X_\nu})\le C_\nu$, where
$X_\nu=H_1\cap\ldots\cap H_{d-\nu}$,   $\nu=0,\ldots, d$. See \cite{HLbook}, Def.~1.1.11
for the definition of a regular sequence of sections of a line bundle with respect to a given sheaf.
Now apply Kleiman's boundedness  criterion
\cite[Thm. 1.7.8]{HLbook} to obtain the boundedness
of the family of the sheaves $\hat E$ associated with the pairs $(E,\alpha)$
from $\mathcal{S}^{\mu ss}(c,\delta)$. The boundedness of the family of
the pairs $(E,\alpha)$ themselves then follows by the same argument as in the proof
of Lemma 2.5 in \cite{HL1}.
\end{proof}

By Proposition \ref{bounded} and semicontinuity we can fix a sufficiently large number $m$ such
that for each pair $(E,\alpha)$ in $\mathcal{S}^{\mu ss}(c,\delta)$ the sheaf $E$ is
$m$-regular. We define now $\widetilde{R}^{\mu ss}(c,\delta)$ as the locally closed subscheme of the scheme
$$
Y=\fquot ,
$$
with $\HH=V\otimes\cO_X(-m)$ and $\dim V=P_c(m)$,
formed by the pairs $([g:\mathcal{H}\to E],[a:\mathcal{H}\to\mathcal{F}])$
such that
$(E,\alpha)\in\mathcal{S}^{\mu ss}(c,\delta)$,
is $\mu$-semistable with respect to  $\delta_{d-1}$,
where the framing $\alpha$ is defined by the relation $a=\alpha\circ g$,
and $g$ induces an isomorphism $V\to H^0(E(m))$.
By (\ref{ss in mu ss}) $\widetilde{R}^{\mu ss}(c,\delta)$ contains a subset
$R^{ss}(c,\delta)$ consisting of semistable pairs $(E,\alpha)$,
and it is known that $R^{ss}(c,\delta)$ is open in $\widetilde{R}^{\mu ss}(c,\delta)$. We
denote by
$
R^{\mu ss}(c,\delta)
$
the closure of $R^{ss}(c,\delta)$ in $\widetilde{R}^{\mu ss}(c,\delta)$.

\subsection{Choosing a semiample sheaf $\LL(n_1,n_2)$ on
$R^{\mu ss}(c,\delta)$}

From now on we assume that:\\
(i) $X$ is a surface (i.e. $d=2$),\\
(ii) $\mathcal{F}$ is an $\mathcal{O}_D$-module, where $D\subset X$ is a fixed big and nef curve,\\
(iii) $\deg P_c(m)=2$.

We will identify $K(X)_{\rm num}$ with the group $\mathbb{Z}\oplus NS(X)\oplus\mathbb{Z}$ via the map sending the class
$[E]$ of a sheaf $E$ to the triple $(\rk(E), c_1(E),c_2(E))$.
We fix a polynomial
$$
\delta(m)=\delta_1m+\delta_0\in\mathbb{Q}[m]\ {\rm with}\ \delta_1>0.
$$
For any framed sheaf $(E,\alpha)$ on $X$ we set
$$
P_{(E,\alpha)}(l):=P_E(l)-\varepsilon(\alpha)\delta(l)
$$
If $(E,\alpha)\in\mathcal{S}^{\mu ss}(c,\delta)$, then $\alpha\ne 0$, so that $\epsilon(\alpha)=1$, and  there is a surjective   morphism
$V\otimes\mathcal{O}_X(-m)\to E$. Since the family of subsheaves $E'$ of $E$
generated by all subspaces  $V'$ of $V$ is bounded, the set $\mathcal{N}_{(E,\alpha)}$ of their Hilbert
polynomials $P_{E'}$ is finite. Hence, since the family $\mathcal{S}^{\mu ss}(c,\delta)$ is
bounded, the set
$$\mathcal{N}_X(c,\delta):=
\underset{(E,\alpha)\in\mathcal{S}^{\mu ss}(c,\delta)}\bigcup\mathcal{N}_{(E,\alpha)}$$
is finite.

Now for each polynomial $B\in\mathcal{N}_X(c,\delta)$, where $B=P_{E'}$, for $E'$
a subsheaf of some framed sheaf
$(E,\alpha)\in\mathcal{S}^{\mu ss}(c,\delta)$, defined by a subspace $V'$ of $V$,
together with the induced framing $\alpha'$, we denote
\begin{equation*}
G_B(l):=\dim V\left(1+\varepsilon(\alpha')
\frac{\delta(m)}{P_{(E',\alpha')}(m)}\right)P_{(E',\alpha')}(l)
-\dim V'\left(1+\frac{\delta(m)}{P_{(E,\alpha)}(m)}\right)P_{(E,\alpha)}(l).
\end{equation*}
Since the set $\{G_B|B\in\mathcal{N}_X(c,\delta)\}$ is finite, there exists a
rational number $\ell_0$ such that for any $\ell'\ge\ell_0$ the implication
\begin{equation}
G_B(\ell')>0\ \Rightarrow\ G_B(l)\ {\rm is\ positive\ for}\ l\gg0
\end{equation}
is true for all $B\in\mathcal{N}_X(c,\delta)$.

Fix an integer $k>0$ large enough to ensure that
\begin{equation*}
H^1(X,E(m-k))=0\ \ \mbox{for all} \ \ (E,\alpha)\in\mathcal{S}^{\mu ss}(c,\delta).
\end{equation*}
For any
$(E,\alpha)\in\mathcal{S}^{\mu ss}(c,\delta)$ there is a dense open subset
$|kH|^*$ in the linear system $|kH|$ consisting of the smooth curves $C$ which are transversal to the framing divisor $D$ and do not meet the singular locus of $(E,\alpha)$, that is, the locus of points $x\in S$ where $E$ is not locally free or $x\in D$, $\alpha|_x=0$.
For any curve $C\in|kH|^*$, we have $$
P_{c|_C}(l):=P_{E|C}(l)=P_{E}(l)-P_{E}(l-k)=P_c(l)-P_c(l-k)\ , \ \
P_{(E|_C,\alpha|_C)}= P_{c|_C} - \delta.
$$
Consider the rational functions
\begin{equation}\label{AX1}
A_X(l):={P_{(E,\alpha)}(l)}\frac{\delta(m)}{P_{(E,\alpha)}(m)}-\delta(l)\in\mathbb{Q}(l),
\end{equation}
\begin{equation}\label{AC}
A_C(l):={P_{(E|_C,\alpha|_C)}(l)}\frac{\delta_C}{P_{(E|_C,\alpha|_C)}(m)}-
\delta_C\in\mathbb{Q}(l),
\end{equation}
where, as before, $\delta(l):=\delta_1l+\delta_0$ and   we set $\delta_C:=k\delta_1$.
Let
$$
P_c(l)=p_2l^2+p_1l+p_0,\ \ \ p_i\in\mathbb{Q}.
$$
The equality
\begin{equation}\label{AX(l)=AC(tilde l)}
A_X(l)=A_C(\tilde{l}),
\end{equation}
considered as an equation in $\tilde{l}$, in view of (\ref{AX1}) and (\ref{AC}), yields
\begin{equation}\label{tilde(l)eq}
\tilde{l}=L(l):=A_X(l)\frac{p_2(2m-k)+p_1-\delta_1}{2p_2\delta_1k}+m
\end{equation}
For any $C\in|kH|^*$, set
$$\mathcal{H}_C:=V_C\otimes\mathcal{O}_C(-m),\qquad\dim V_C:=P_c(m)-P_c(m-k).$$
We have $$P_{c|_C}(l)=P_c(l)-P_c(l-k)=k(p_2(2l-k)+p_1).$$
Consider the Quot scheme $Y_C:= \mbox{$\mathbb Q$uot}(\mathcal{H}_C,P_{c|_C},\mathcal{F}|_C)$. For any
$(E,\alpha)\in Y$ and any $C\subset|kH|^*$,
consider the framed sheaf $(E|_C,\alpha|_C)$. The family of subsheaves $E'_C$
of $E|_C$ generated by all the subspaces $V_C'$ of $V_C$ is bounded, so that the set
$\mathcal{N}_{(E|_C,\alpha|_C)}$ of polynomials $P_{E'_C}$ is finite. Hence, since the family
$\mathcal{S}^{\mu ss}(c,\delta)$ is
bounded, the set
$$\mathcal{N}_C(c|_C,\delta_C):=
\underset{(E,\alpha)\in\mathcal{S}^{\mu ss}(c,\delta)}\bigcup\mathcal{N}_{(E|_C,\alpha|_C)}$$
is finite. On the other hand, the set
$$
\mathcal{N}(c|_C,\delta_C):=\underset{C\in|kH|^*}\bigcup\mathcal{N}_C(c|_C,\delta_C)
$$
is   finite as well.

Now for each polynomial $B\in\mathcal{N}(c|_C,\delta_C)$, where $B=P_{E'_C},\ E'_C$
a subsheaf of a sheaf $(E|_C,\alpha|_C)$ for some framed sheaf
$(E,\alpha)\in\mathcal{S}^{\mu ss}(c,\delta)$, defined by a subspace $V_C'$ of $V_C$,
together with the induced framing $\alpha'_C$, we denote
\begin{multline*}
\widetilde{G}_B(l):=\dim V_C\cdot\left(P_{(E'|_C,\alpha'|_C)}(l)+\varepsilon(\alpha'|_C)
\frac{\delta(m)}{P_{(E|_C,\alpha|_C)}(m)}\right) \\
-\dim V_C'\cdot\left(1+\frac{\delta_C(m)}{P_{(E|_C,\alpha|_C)}(m)}\right)P_{(E|_C,\alpha|_C)}(l).
\end{multline*}
Since the set $\{\widetilde{G}_B|B\in\mathcal{N}(c|_C,\delta_C)\}$ is finite, there exists a  rational number $\ell_{0C}$ such that for any $\ell'\ge\ell_{0C}$ the implication
\begin{equation}\label{biggerthanzero}
\widetilde{G}_B(\ell')>0\ \Rightarrow\ \widetilde{G}_B(l)\ {\rm is\ positive\ for}\ l\gg0
\end{equation}
is true for all $B\in\mathcal{N}(c|_C,\delta_C)$.

Now choose a number $\ell_X\ge\ell_0$ such that $L(\ell_X)\ge\ell_{0C}$,  where $\ell_{0C}$ was defined before formula \eqref{biggerthanzero} and $L(l)$ was defined earlier in
(\ref{tilde(l)eq}).
 Set
$
\ell_C:=L(\ell_X)
$.
By
(\ref{AX(l)=AC(tilde l)}) we have
\begin{equation}\label{AX2}
A_X(\ell_X)=A_C(\ell_C),\ \ \ \ \ \ \ \ell_X\ge\ell_0,\ \ \ \ell_C\ge\ell_{0C}.
\end{equation}
Let
$$\LL(n_1,n_2)=\left[\operatorname{pr}_1^\ast \lambda_{\tilde \Q}(u_1)^{\otimes n_1}
\otimes\operatorname{pr}_2^\ast\cO_{\PP}(n_2) \right]|_{R^{\mu ss}(c,\delta)}$$
where we set
\begin{equation}\label{frac}
\frac{n_1}{n_2}:=A_X(\ell_X)=\delta(m)\frac{P_{(E,\alpha)}(\ell_X)}{P_{(E,\alpha)}(m)}-\delta(\ell_X),
\end{equation}
and $\tilde\Q$ is the  universal quotient sheaf on $\quot$, see the last paragraph of Section \ref{Quot}.
This choice of the ratio $\frac{n_1}{n_2}$ will enable us to obtain the isomorphism \eqref{cong}.

Now one has the following analogues of theorems of Mehta and Ramanathan \cite{MR1,MR2}.
\begin{thm}\label{Meh-Ram}
Let $(E,\alpha:E\to\mathcal{F})\in\mathcal{S}^{\mu ss}(c,\delta)$ be a $\mu$-semistable framed sheaf of positive rank.
Then for all sufficiently big $k$, and for a generic curve
$C\in|kH|$, the framed sheaf $(E|_C,\alpha|_C)$ is $\mu$-semistable on $C$
with respect to  $\delta_C$.
\end{thm}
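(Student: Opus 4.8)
The plan is to follow the classical argument of Mehta and Ramanathan, in the form presented for ordinary torsion-free sheaves in Section~7.2 of \cite{HLbook}, and to adapt it to framed sheaves. The first thing I would record is that framed slopes scale uniformly under restriction to a curve $C\in|kH|$: for any subsheaf $G\subset E$ with induced framing $\alpha|_G$ one has $\deg_C(G|_C)=c_1(G)\cdot kH=k\deg G$, while the stability parameter on $C$ is $\delta_C=k\delta_1$. Hence, provided the framing type is preserved in the sense that $\varepsilon\big((\alpha|_G)|_C\big)=\varepsilon(\alpha|_G)$, one gets $\mu_C(G|_C,(\alpha|_G)|_C)=k\,\mu(G,\alpha|_G)$, and the same identity holds for $(E,\alpha)$ itself. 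The point of this identity is that a framed subsheaf of $(E|_C,\alpha|_C)$ coming by restriction from a framed subsheaf of $(E,\alpha)$ can never destabilize the restriction, precisely because $(E,\alpha)$ is $\mu$-semistable on $X$.

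I would then argue by contradiction, assuming that $(E|_C,\alpha|_C)$ fails to be $\mu$-semistable for $C$ running over a dense subset of the open locus $|kH|^*$ of smooth curves transversal to $D$ and missing the singular locus of $(E,\alpha)$. On each such $C$ I would take the maximal destabilizing framed subsheaf $(G_C,\beta_C)$, i.e. the first step of the framed Harder--Narasimhan filtration of $(E|_C,\alpha|_C)$ in the sense of \cite{HL1}, which is unique. Using the boundedness of $\mathcal{S}^{\mu ss}(c,\delta)$ from Proposition~\ref{bounded} together with the finiteness of the associated sets of Hilbert polynomials, the polynomial $P_{G_C}$ and the framing type $\varepsilon(\beta_C)$ take only finitely many values, so after shrinking to a dense open $U\subset|kH|^*$ I may assume they are constant. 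These subsheaves then organize into a single subsheaf $\mathcal{G}$ of the pullback of $E$ to the universal curve over $U$, restricting to $(G_C,\beta_C)$ on each fibre.

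The hardest step, which I expect to be the main obstacle, is to show that for $k\gg0$ this relative subsheaf $\mathcal{G}$ is the restriction to the curves of a single coherent subsheaf $G\subset E$ on all of $X$. This is exactly the spreading-out step of Mehta--Ramanathan: as $k$ grows, the normal bundle $\mathcal{O}_C(kH)$ of $C$ becomes arbitrarily positive, so by an Enriques--Severi--Zariski type vanishing the relevant obstruction group (a first cohomology group twisted by a large negative multiple of $H$) vanishes, and one concludes as in Section~7.2 of \cite{HLbook} that $\mathcal{G}$ descends to a saturated subsheaf $G\subset E$ with $G|_C\cong G_C$ for generic $C\in U$. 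Here the extra care needed in the framed category concerns the framing type: because $\mathcal{F}$ is supported on $D$ and a generic $C\in|kH|^*$ meets $D$ transversally in points lying in the non-vanishing locus of $\alpha|_G$, I would check that $\varepsilon(\alpha|_G)$ agrees with the constant type $\varepsilon(\beta_C)$, so that $\alpha|_G$ restricts to $\beta_C$ and $(G,\alpha|_G)$ genuinely induces $(G_C,\beta_C)$ on $C$.

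Granting this, the contradiction follows at once from the scaling identity of the first paragraph: since $(G_C,\beta_C)$ destabilizes $(E|_C,\alpha|_C)$,
$$\mu(G,\alpha|_G)=\tfrac1k\,\mu_C(G_C,\beta_C)>\tfrac1k\,\mu_C(E|_C,\alpha|_C)=\mu(E,\alpha),$$
which contradicts the $\mu$-semistability of $(E,\alpha)$ on $X$. I therefore expect to conclude that for all sufficiently large $k$ and generic $C\in|kH|$ the framed sheaf $(E|_C,\alpha|_C)$ is $\mu$-semistable with respect to $\delta_C$.
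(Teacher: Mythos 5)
First, a point of comparison: the paper does not actually prove Theorem \ref{Meh-Ram} --- its ``proof'' is the citation \cite[Theorem 67]{Sala1}, and the argument given there is precisely the adaptation of the Mehta--Ramanathan induction that you outline. So your route coincides with the proof the paper relies on: scaling of framed slopes under restriction, the relative maximal destabilizing framed subsheaf over the universal curve, and the descent (spreading-out) step for $k\gg 0$. As a blind reconstruction the architecture is right, and deferring the descent step to \cite[Section 7.2]{HLbook} is reasonable, with the proviso that in the framed category one needs existence, uniqueness and compatibility with base change of the \emph{framed} Harder--Narasimhan filtration (that, together with the bookkeeping of framings, is where the actual work in \cite{Sala1} lies); also, the finiteness of the possible polynomials $P_{G_C}$ comes from Grothendieck's lemma on subsheaves of bounded slope rather than directly from Proposition \ref{bounded}.

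There is, however, one genuine gap, and it sits exactly where your final displayed contradiction is derived. Your justification that $\varepsilon(\alpha|_G)=\varepsilon(\beta_C)$ for generic $C$ covers only the case where the image $\alpha(G)\subset\mathcal{F}$ is one-dimensional (then a generic $C$ meets its support in points where it does not vanish). But the induced framing on the descended subsheaf $G$ can be nonzero with \emph{zero-dimensional} image, supported at finitely many points of $D$; a generic $C$ misses these points, so $\varepsilon(\beta_C)=0$ while $\varepsilon(\alpha|_G)=1$. Then the scaling identity fails by $k\delta_1/\rk G$, and the destabilization inequality on $C$ yields only $\mu(G)>\mu(E,\alpha)$, which is perfectly compatible with $\mu$-semistability of $(E,\alpha)$: the latter gives merely $\mu(G)\le\mu(E,\alpha)+\delta_1/\rk G$, so no contradiction. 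The repair is short but must be said: replace $G$ by $G\cap\ker\alpha=\ker(\alpha|_G)$. Since $G/(G\cap\ker\alpha)$ embeds into the zero-dimensional sheaf $\alpha(G)$, this changes neither the rank nor the degree, the induced framing becomes zero, and $\mu$-semistability applied to $(G\cap\ker\alpha,0)$ gives $\mu(G)\le\mu(E,\alpha)$, the desired contradiction. Note that this degenerate case genuinely occurs for subsheaves even though it cannot occur for $(E,\alpha)$ itself: semistability forces $\alpha(E)$ to be one-dimensional (otherwise $\ker\alpha$, of full rank and degree $\deg E$, would destabilize), but it says nothing about the images $\alpha(G)$ of proper subsheaves. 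With this case added, your outline matches the cited proof.
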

\begin{proof}
See \cite [Theorem 67]{Sala1}.
\end{proof}
\begin{thm}\label{Meh-Ram2}
In conditions of Theorem \ref{Meh-Ram} let the framing sheaf $\mathcal{F}$ be a locally
free $\mathcal{O}_D$-module
and let $(E,\alpha)$ be a $\mu$-stable framed sheaf such that $E$ is locally free in a neighborhood of $D$ and $\alpha|_D:\ E|_D\to\mathcal{F}$ is an isomorphism.
Then for all sufficiently big $k$, and for a generic curve
$C\in|kH|$, the framed sheaf $(E|_C,\alpha|_C)$ is $\mu$-stable on $C$
with respect to  $\delta_C$.
\end{thm}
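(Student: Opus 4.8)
The plan is to deduce the $\mu$-stability of $(E|_C,\alpha|_C)$ from the $\mu$-stability of $(E,\alpha)$ by the same restriction mechanism used for $\mu$-semistability in Theorem~\ref{Meh-Ram}, upgrading the non-strict inequalities to strict ones and then handling the boundary case where a restricted subsheaf might accidentally achieve equality of slopes. Concretely, I would start from the Mehta--Ramanathan type argument of \cite{Sala1}: for sufficiently big $k$ and generic $C\in|kH|$, any destabilizing framed subsheaf of $(E|_C,\alpha|_C)$ on $C$ extends (after saturation) to a destabilizing framed subsheaf of $(E,\alpha)$ on $X$, with the slope comparison on $C$ controlled by the slope comparison on $X$ via restriction. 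Since $(E,\alpha)$ is assumed $\mu$-\emph{stable}, all the corresponding inequalities on $X$ are \emph{strict}, and the restriction estimates preserve strictness for $k\gg0$, giving $\mu$-stability on $C$ for all proper framed subsheaves of positive rank.

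The role of the extra hypotheses---$\F$ locally free on $D$, $E$ locally free near $D$, and $\alpha|_D\colon E|_D\to\F$ an isomorphism---is to control the framing term $\varepsilon(\alpha|_C)$ under restriction. First I would use that $C$ is transversal to $D$ and avoids the singular locus (as in the choice of $|kH|^*$ above), so that $C\cap D$ is a finite reduced set of points where $E$ is locally free, and $\alpha|_{C\cap D}$ is the restriction of the isomorphism $\alpha|_D$; hence $\alpha|_C\ne0$ and, more importantly, for any framed subsheaf $(E'_C,\alpha'_C)$ the value of $\varepsilon(\alpha'_C)$ is correctly inherited from the configuration of $E'$ relative to $\ker\alpha$ on $X$. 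This is what guarantees that the framing-corrected degrees $\deg(E'_C,\alpha'_C)$ restrict compatibly with $\deg(E',\alpha')$, so that the strict stability inequality $\mu(E',\alpha')<\mu(E,\alpha)$ on $X$ translates to the strict inequality $\mu(E'_C,\alpha'_C)<\mu(E|_C,\alpha|_C)$ on $C$ with respect to $\delta_C=k\delta_1$.

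The step I expect to be the main obstacle is ruling out the \emph{semistable-but-not-stable} boundary on $C$: even though every destabilizing subsheaf on $X$ gives a \emph{strict} inequality, the restriction procedure could in principle produce on $C$ a subsheaf whose framed slope equals that of $(E|_C,\alpha|_C)$ without lifting to an equality-achieving subsheaf on $X$. The standard way around this is to observe that such a subsheaf of $(E|_C,\alpha|_C)$ would, by the Mehta--Ramanathan extension, saturate to a framed subsheaf $(E',\alpha')$ of $(E,\alpha)$ on $X$ with $\mu(E',\alpha')=\mu(E,\alpha)$, contradicting $\mu$-stability on $X$; this is precisely where the strictness in the hypothesis is consumed, and where the locally-free and isomorphism conditions are needed to ensure the lifted framing $\varepsilon(\alpha')$ matches $\varepsilon(\alpha'_C)$ so that the two framed slopes are genuinely equal and not merely close.

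Finally, I would assemble these pieces: invoke Theorem~\ref{Meh-Ram} (and its proof in \cite[Theorem~67]{Sala1}) to fix $k$ and the generic $C$, verify $C\in|kH|^*$ so that the geometric hypotheses on $D$, $E$, $\alpha|_D$ restrict cleanly, and then run the slope comparison with \emph{strict} inequalities throughout, concluding $\mu$-stability of $(E|_C,\alpha|_C)$ on $C$ with respect to $\delta_C$.
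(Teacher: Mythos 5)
The paper offers no argument of its own here: its ``proof'' of Theorem \ref{Meh-Ram2} is the citation \cite[Theorem 74]{Sala1}, so the real benchmark is Sala's proof, which adapts the second Mehta--Ramanathan paper \cite{MR2} and \cite[Thm.~7.2.8]{HLbook} to framed sheaves. Measured against that, your sketch has a genuine gap exactly at the point you yourself flag as ``the main obstacle''. The step ``any destabilizing framed subsheaf of $(E|_C,\alpha|_C)$ on $C$ extends (after saturation) to a destabilizing framed subsheaf of $(E,\alpha)$ on $X$'' is not an available tool: restriction of subsheaves goes from $X$ to $C$, and a subsheaf of $E|_C$ living on a single curve has in general no lift to a subsheaf of $E$. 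What the Mehta--Ramanathan mechanism actually does (already in the semistable case, Theorem \ref{Meh-Ram}) is glue the \emph{maximal destabilizing} subsheaves of $E|_C$, as $C$ varies over all general curves in $|kH|$, into a single subsheaf of $E$ on $X$; this gluing works only because the maximal destabilizer is unique, hence compatible under specialization and invariant under monodromy. No individual curve ever ``sends'' a subsheaf back to $X$.

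For the stability statement this is precisely what breaks down, which is why it is a separate and harder theorem rather than the semistable argument with strict inequalities. The failure mode to be excluded is that $E|_C$ is properly $\mu$-semistable: a framed subsheaf of \emph{equal} framed slope is in general far from unique, so there is no canonical family over the linear system to glue, and your proposed contradiction with $\mu$-stability on $X$ cannot be run. In \cite{MR2}, \cite[Thm.~7.2.8]{HLbook}, and in the framed version \cite[Theorem 74]{Sala1}, this case is handled by a different idea: one replaces the (non-unique) equal-slope destabilizer by a canonical object --- the (framed analogue of the) socle or extended socle of the restricted sheaf --- and then runs the gluing/monodromy argument over the incidence variety for \emph{that} object, deriving a contradiction with $\mu$-stability of $(E,\alpha)$. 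The hypotheses that $\F$ is locally free, $E$ is locally free near $D$, and $\alpha|_D$ is an isomorphism enter in making this canonical construction behave well in the framed setting, not merely (as in your second paragraph) in bookkeeping the terms $\varepsilon(\alpha'_C)$. In short, your reduction ``semistable argument with strict inequalities, plus lift any equal-slope subsheaf back to $X$'' assumes a lifting that does not exist; the boundary case you deferred is the whole content of the theorem.
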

\begin{proof}
See \cite [Theorem 74]{Sala1}.
\end{proof}

\begin{prop}\label{gener}
For $\nu\gg 0$ the line bundle $\LL(n_1,n_2)^\nu$ on $R^{\mu ss}$ is generated
by its $SL(V)$-invariant sections.
\end{prop}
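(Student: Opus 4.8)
The plan is to reduce the statement to a pointwise base-point-freeness assertion and then globalise it. It suffices to show that for every point $y=[(E,\alpha)]\in R^{\mu ss}(c,\delta)$ there exists an $\operatorname{SL}(V)$-invariant section of some power $\LL(n_1,n_2)^{\otimes t}$ which does not vanish at $y$. Granting this at every point, the non-vanishing loci of the $\operatorname{SL}(V)$-invariant sections of the various powers of $\LL(n_1,n_2)$ form an open cover of $R^{\mu ss}(c,\delta)$; since the latter is of finite type over $\Bbbk$, hence quasi-compact, finitely many of these sections already have empty common zero locus, and replacing each by a suitable power so as to land in a common $\LL(n_1,n_2)^{\nu}$ (with $\nu\gg0$) yields the asserted global generation by $\operatorname{SL}(V)$-invariant sections.

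To produce the section at $y$ I would pass to a curve. Every point of $R^{\mu ss}(c,\delta)$ represents a $\mu$-semistable framed sheaf $(E,\alpha)\in\mathcal{S}^{\mu ss}(c,\delta)$, so by Theorem \ref{Meh-Ram} there is, for $k\gg0$, a generic curve $C\in|kH|^*$ for which $(E|_C,\alpha|_C)$ is $\mu$-semistable on $C$ with respect to $\delta_C=k\delta_1$. Using $H^1(X,E(m-k))=0$, the restriction sequence $0\to E(m-k)\to E(m)\to E|_C(m)\to0$ identifies $H^0(E|_C(m))$ with the quotient of $V=H^0(E(m))$ of dimension $\dim V_C=P_c(m)-P_c(m-k)$, so the restricted family $(E|_C,\alpha|_C)$ determines, through the universal property of Proposition \ref{univprop}, an $\operatorname{SL}$-equivariant classifying map from a neighbourhood of $y$ to $Y_C=\operatorname{Quot}(\HH_C,P_{c|_C},\F|_C)$ carrying $y$ to $[(E|_C,\alpha|_C)]$. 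On the curve, the framed GIT construction of \cite{HL1} endows $Y_C$ with an ample $\operatorname{SL}(V_C)$-linearised polarisation $\LL_C$ — a determinant line bundle twisted by the framing bundle $\cO_{\PP_C}(1)$ with weight governed by $\delta_C$ — for which GIT-semistability coincides with framed $\mu$-semistability (on a curve the latter being the same as Gieseker semistability). Hence $[(E|_C,\alpha|_C)]$ is GIT-semistable, so there is an $\operatorname{SL}(V_C)$-invariant section $s_C\in H^0(Y_C,\LL_C^{\otimes\nu_C})$ with $s_C([(E|_C,\alpha|_C)])\neq0$.

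The heart of the argument, and the step I expect to be the main obstacle, is to transport $s_C$ to an $\operatorname{SL}(V)$-invariant section of a power of $\LL(n_1,n_2)$ on the surface side. The mechanism is the functoriality of the determinant-of-cohomology construction \eqref{det1}: restricting the universal family on $R^{\mu ss}(c,\delta)\times X$ to $R^{\mu ss}(c,\delta)\times C$ turns the determinant data attached to a class $v\in K(C)$ into the determinant data attached to $i_{C*}v\in K(X)$, where $i_C\colon C\hookrightarrow X$ and $i_{C*}[\cO_C]=1-h^{-k}$ in $K(X)$. The delicate part is the numerical bookkeeping that makes the determinant and framing weights match on the two sides: the surface polarisation $\LL(n_1,n_2)$ carries the ratio $n_1/n_2=A_X(\ell_X)$ fixed in \eqref{frac}, while $\LL_C$ carries the ratio $A_C(\ell_C)$, and these two ratios were arranged to coincide precisely through the identity \eqref{AX2}, $A_X(\ell_X)=A_C(\ell_C)$. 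This equality is exactly what allows the restriction operation to carry $\LL_C$ to a positive power $\LL(n_1,n_2)^{\otimes t}$ and, accordingly, to send $s_C$ to a section $s\in H^0\big(R^{\mu ss}(c,\delta),\LL(n_1,n_2)^{\otimes t\nu_C}\big)$. Since the determinant-of-cohomology description is valid for every flat family, $s$ is genuinely defined on all of $R^{\mu ss}(c,\delta)$, and it is $\operatorname{SL}(V)$-invariant because the classifying map is $\operatorname{SL}$-equivariant and $s_C$ is $\operatorname{SL}(V_C)$-invariant.

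Finally, as $s_C$ does not vanish at $[(E|_C,\alpha|_C)]$, its image $s$ does not vanish at $y$; this furnishes the required non-vanishing $\operatorname{SL}(V)$-invariant section of a power of $\LL(n_1,n_2)$ and completes the pointwise construction, whence the global-generation statement follows as in the first paragraph. Beyond the numerical matching, the points demanding the most care are the verification that the restriction of the universal family identifies the two determinant-and-framing polarisations up to the power $t$, and that the induced correspondence of sections is compatible with the $\operatorname{SL}(V)\to\operatorname{SL}(V_C)$ actions so that both invariance and non-vanishing are preserved.
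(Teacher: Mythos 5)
Your proposal is correct and follows essentially the same route as the paper's proof: restriction to a general curve via Theorem \ref{Meh-Ram}, the curve-level framed GIT of \cite{HL1} (the paper's Lemma \ref{newlemma1}), the line-bundle comparison \eqref{cong} forced by the numerical matching of \eqref{frac} and \eqref{AX2}, and transport of $\operatorname{SL}(V_C)$-invariant sections back to $\operatorname{SL}(V)$-invariant sections on $R^{\mu ss}(c,\delta)$ (the monomorphism \eqref{mono}), followed by the pointwise non-vanishing conclusion. The only execution difference is that the paper replaces your local classifying maps by the projective frame bundle $\widetilde{S}\to S$ and descends sections along this principal $\operatorname{PSL}(V_C)$-bundle, which handles the non-canonical identification of $V_C$ with $H^0(E|_C(m'))$ uniformly over the base.
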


\begin{proof}
Let $S$ be a noetherian scheme parametrizing a flat family
$(\mathbf{E},\alpha_\mathbf{E})$
of $\mu$-semistable framed sheaves $(E,\alpha:E\to\FFF)$ on $X$ with numerical K-theory class $c=(r,\xi,c_2)$.
Let $C\in|kH|$ be a general curve and $k\gg0$. Then $C$ is smooth and
transversal to $D$, and
the restriction of $(\mathbf{E},\alpha_\mathbf{E})$ to $S\times C$ yields a family
$(\mathcal{E},\alpha_{\mathcal{E}})$
of framed sheaves $(E_C,\alpha_C:E_C\to \mathcal{F}|_C)$ on the curves $C$. By Proposition \ref{openness} and Theorem \ref{Meh-Ram}, the general element in this family is $\mu$-semistable.
Let $M_C:=M^{ss}(c|_C,\delta_C)$ be the moduli space of framed sheaves on $C$ with numerical class   $c|_C=i^*c$ that are
 semistable with respect to  $\delta_C$. Note that, since $C$ is a curve, semistability
coincides with $\mu$-semistability. By Theorem \ref{Meh-Ram}   a rational map $S\dasharrow M_C$ is well defined.

The class $c|_C$ is
uniquely determined by its rank and by $\xi|_C$. Let $m'$ be a large positive
integer, $P':=P_{c|_C}$, let $V_C$ be a vector space of dimension $P'(m')$, let $\mathcal{H}':=V_C\otimes\mathcal{O}_C(-m')$ and let
$Q_C\subset{\rm Quot}_C(\mathcal{H}',P')$ be the closed subset of quotients with first Chern class
$\xi|_C$, together with the universal quotient
$\mathcal{O}_{Q_C}\boxtimes\mathcal{H}'\to\mathcal{E}'$.
Furthermore, let
$\mathbb{P}_C=\mathbb{P}\big(\Hom(V_C,H^0(C,\mathcal{F}(m')|_C))^\ast\big)$,
so that a point $[a]\in\mathbb{P}_C$ corresponds to a morphism $a:\mathcal{H}'\to \mathcal{F}|_C$.
Consider the closed subscheme
$Y_C=\mbox{$\mathbb Q$uot}(\mathcal{H}',P',\mathcal{F}|_C)$ of $Q_C\times\mathbb{P}_C$
with projections
$Q_C\stackrel{p_1}{\leftarrow \joinrel\relbar}Y_C\stackrel{p_2}{\relbar\joinrel\rightarrow}\mathbb{P}_C$,
defined similarly to the scheme $Y$ above. Clearly, $\operatorname{SL}(V_C)$ acts on
$Y_C$.
Denote
$\deg C=C\cdot H$, and consider the line bundle
$$
\mathcal{L}'_0(n_1,n_2k):=p_1^*\lambda_{\mathcal{E}'}(u_0(c|_C))^{n_1\deg C}\otimes
p_2^*\mathcal{O}_{\mathbb{P}_C}(n_2k)
$$
on $Y_C$. If $m'$ is sufficiently large, the following results hold (see \cite{HL1} Proposition 3.2).

\begin{lemma}\label{newlemma1}  Given a point $([g:\mathcal{H}'\to E_C],[a:\mathcal{H}'\to \mathcal{F}|_C])\in Y_C$,
the following assertions are equivalent:
\begin{enumerate}
\item  $(E_C,[a])$ is a semistable pair and $V_C\to H^0(E_C(m'))$ is an isomorphism.
\item $([g],[a])$ is a semistable point in $Y_C$ for the action of $\operatorname{SL}(V_C)$ with respect to
the canonical linearization of $\mathcal{L}'_0(n_1,n_2k)$.
\item There is an integer $\nu$ and an $\operatorname{SL}(V_C)$-invariant section $\sigma$ of
$\mathcal{L}'_0(n_1,n_2k)^\nu$ such that $\sigma([g],[a])\ne0$.
\end{enumerate}
\end{lemma}

Jordan-H\"older filtrations for semistable framed sheaves were introduced in \cite{HL1}, Proposition 1.13, and the ensuing notion of S-equivalence was given there in Definition 1.14.  In Section \ref{descr_mor}
we shall also use the notion of a $\mu$-Jordan-H\"older filtration of a framed sheaf
$(E,\alp)$.  It is constructed in a similar way, see \cite{Sala1}, Definition 65 and Theorem 66.
We call $(E,\alpha)$ \ \ $\mu$-polystable if $E$ has a filtration $0=E_0
\subset E_1\subset\ldots\subset E_n=E$ such that: (i)   $E$  is isomorphic to the graded object
$\bigoplus\limits_{i=1}^nE_i/E_{i-1}$;
(ii) the filtration $\ldots\subset(E_i,\alpha|_{E_i})
\subset(E_{i+1},\alpha|_{E_{i+1}})\subset\ldots$
is a $\mu$-Jordan-H\"older filtration of
$(\EEE,\alp)$. For a given framed sheaf $F=(E,\alpha_E)$  we will denote by $gr^\mu F$ the associated graded
$\mu$-semistable framed sheaf $\bigoplus\limits_{i=1}^n(E_i/E_{i-1},\alpha_i)$ where
$\bigoplus\limits_{i=1}^nE_i/E_{i-1}$ is the graded sheaf associated
with a  $\mu$-Jordan-H\"older filtration $0=E_0\subset E_1\subset\ldots\subset E_n=E$ of $E$ and where
$\alpha_i:E_i/E_{i-1}\to\mathcal{F}$ are the induced framings
(see \cite[Section 1]{HL1}). If, moreover, $E$ is locally free along $D$,
then by $(gr^\mu F)^{\dual\dual}$ we will understand the graded $\mu$-semistable framed sheaf
$\underset{i}\oplus((E_i/E_{i-1})^{\dual\dual},\alpha_i)$.
\begin{rema}\label{S-eq}
Note that in the case when $\dim X=1$, the $\mu$-Jordan-H\"older filtration of a framed semistable sheaf $F$ on $X$ coincides
with the Jordan-H\"older filtration of $F$; hence, the two framed sheaves $F_1$ and $F_2$ on $X$ are S-equivalent if
and only if their associated graded objects $gr^\mu F_1$ and $gr^\mu F_2$ are isomorphic.
\end{rema}

The following result is essentially contained in Proposition 3.3 of \cite{HL1}.
\begin{lemma} \label{newlemma2}
Two points $([g_j:\mathcal{H}'\to E_{jC}],[a_j:\mathcal{H}'\to \mathcal{F}|_C]),\ j=1,2$ are
separated by an $\operatorname{SL}(V_C)$-invariant section in some tensor power of
$\mathcal{L}'_0(n_1,n_2k)$ if and only if either
both are semistable points but the corresponding framed sheaves $(E_{1C},\alpha_{1C})$ and
$(E_{2C},\alpha_{2C})$ are not $S$-equivalent,  or one of them is
semistable but the other is not.
\end{lemma}

Consider now the exact sequence
\begin{equation}\label{restrict}
0\to\mathbf{E}\otimes\big(\mathcal{O}_S\boxtimes\mathcal{O}_X(-k)\big)\to\mathbf{E}\to\mathcal{E}\to0.
\end{equation}
Assume that $m'$ is big enough so that not only
the results in Lemmas  \ref {newlemma1} and \ref{newlemma2}   hold, but
one also has

{\em  $\mathcal{E}_s$ is $m'$-regular for all $s\in S$.}

\noindent Then $p_*(\mathcal{E}(m'))$ is a locally-free $\mathcal{O}_S$-module of rank $P'(m')$,  where
$\mathcal{E}(m')=\mathcal{E}\otimes\mathcal{O}_S\boxtimes\mathcal{O}_C(m')$
and $p:S\times C\to S$ is the projection. Let $\widetilde{S}:=\mathbb{P}({\rm Isom}(V_C\otimes\mathcal{O}_S,p_*(\mathcal{E}(m')))^*)$, \ \
$\pi:\widetilde{S} \to S$ be  the associated projective frame bundle and $\pi_C:\widetilde{S}\times C\to S\times C$ and $\tilde p=\widetilde{S}\times C\to \widetilde{S}$
the natural maps. On $\tilde S\times C$ there is a universal quotient
$\mathbf{g}:\mathcal{O}_{\widetilde{S}}\boxtimes\mathcal{H}'\twoheadrightarrow
\pi_C^*\mathcal{E}$
and a
framing
$\Psi_\mathcal{E}:\pi_C^*\mathcal{E}\otimes\tilde p^*(\LLL_{\widetilde{S}})
\to
\pi_C^*(\mathcal{O}_S\boxtimes \mathcal{F}|_C)$ for some invertible sheaf $\LLL_{\widetilde{S}}$
on $\widetilde{S}$,
and these data induce, by Proposition \ref{univprop}, a $\operatorname{SL}(P'(m'))$-invariant morphism
$$
\mathbf{f}_\mathcal{E}:\widetilde{S}\to Y_C.
$$
By analogy with  \cite[Prop. 8.2.3]{HLbook} and using the relations
(\ref{AX2}) and (\ref{frac}), we obtain the isomorphism of line bundles
\begin{equation}\label{cong}
\mathbf{f}_\mathcal{E}^*\mathcal{L}'_0(n_1,n_2k)\cong\pi^*\mathcal{L}(n_1,n_2)^{\otimes k}.
\end{equation}

Now set $S=R^{\mu ss}(c,\delta)$. The group $\operatorname{SL}(V)$ acts on $S$, hence also on
$\widetilde{S}$. Thus we have an action of $\operatorname{SL}(V)\times\operatorname{SL}(V_C)$
on $\widetilde{S}$ and by construction the morphism $\mathbf{f}_\mathcal{E}$ is
$\operatorname{SL}(V)\times\operatorname{SL}(V_C)$-invariant, where $SL(V)$ acts trivially on $Y_C$.
Take an arbitrary $\operatorname{SL}(V_C)$-invariant section $\sigma$ of
$\mathcal{L}'_0(n_1,n_2k)^{\otimes\nu}$. Then $\mathbf{f}_\mathcal{E}^*\sigma$ is a
$\operatorname{SL}(V)\times\operatorname{SL}(V_C)$-invariant section. Therefore, since $\pi$
is a principal $\operatorname{PSL}(V_C)$-bundle, this section descends to a
$\operatorname{SL}(V)$-invariant section of the line bundle
$\mathcal{L}(n_1,n_2)^{\otimes\nu k}$. We thus obtain a monomorphism
\begin{equation}\label{mono}
s_\mathcal{E}:H^0(Y_C,\mathcal{L}'_0(n_1,n_2k)^{\otimes\nu})^{\operatorname{SL}(V_C)}\to
H^0(S,\mathcal{L}(n_1,n_2)^{\otimes\nu k})^{\operatorname{SL}(V)}.
\end{equation}
By analogy  with \cite[Lemma 8.2.4]{HLbook}, and  using \cite[Prop. 3.1-3.3]{HL1}, we obtain
the following Lemma.
\begin{lemma}\label{separate}
1. If $s\in R^{\mu ss}(c,\delta)$ is a point such that
$(E_s|_C,\alpha_s|_C:E_s|_C\to \mathcal{F}|_C)$
is semistable with respect to $\delta_C$, there is a
$\operatorname{SL}(V)$-invariant section
$\bar{\sigma}\in H^0(R^{\mu ss}(c,\delta),
\mathcal{L}(n_1,n_2)^{\otimes\nu k})^{\operatorname{SL}(V)}$
such that $\bar{\sigma}(s)\ne0$.

2. If $s_1$ and $s_2$ are the two points in $R^{\mu ss}(c,\delta)$ such that $E_{s_1}|_C$ and
$E_{s_2}|_C$
are both semistable but not $S$-equivalent, or one of them
is semistable and the other is not,  then  for some $\nu$ there are
$\operatorname{SL}(V)$-invariant sections of
$\mathcal{L}(n_1,n_2)^{\otimes\nu k}$  that separate $s_1$ and $s_2$.
\end{lemma}

Proposition \ref{gener} now follows from the first assertion of Lemma \ref{separate}.
\end{proof}

\bigskip
\section{The Uhlenbeck-Donaldson compactification   for framed
sheaves}\label{Uhlenbeck comp}

\subsection{Construction of $M^{\mu ss}(c,\delta)$}
By Proposition \ref{gener}, the sheaf $\mathcal{L}(n_1,n_2)^\nu$ is generated by  its invariant
sections. Thus we can find a finite-dimensional subspace
$W\subset W_\nu:=H^0(R^{\mu ss},\mathcal{L}(n_1,n_2)^\nu)^{\operatorname{SL}(V)}$
that generates $\mathcal{L}(n_1,n_2)^\nu$. Let $\phi_W:R^{\mu ss}(c,\delta)\to\mathbb{P}(W)$ be
the induced $\operatorname{SL}(P_c(m))$-invariant morphism.

\begin{prop}\label{MW proj}
$M_W:=\phi_W(R^{\mu ss}(c,\delta))$ is a projective scheme.
\end{prop}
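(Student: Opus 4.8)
The plan is to prove that $M_W$ is \emph{closed} in the projective space $\PP(W)$; since a closed subscheme of a projective space is again projective, this suffices. The scheme $R^{\mu ss}(c,\delta)$ is itself only quasi-projective, so the image of $\phi_W$ is a priori merely constructible, and the whole difficulty is to promote it to a closed set. The idea, which follows the unframed construction in \cite[Ch.~8]{HLbook}, is that the restriction of $\phi_W$ to the genuinely Gieseker-semistable locus $R^{ss}(c,\delta)$ factors through the \emph{projective} moduli space $M^{ss}(c,\delta)$, and that the closure relation $R^{\mu ss}(c,\delta)=\overline{R^{ss}(c,\delta)}$ forces the entire image to lie in this one closed piece.

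First I would record that $\phi_W$ is a genuine morphism on all of $R^{\mu ss}(c,\delta)$: by Proposition \ref{gener} the chosen subspace $W$ generates $\LL(n_1,n_2)^\nu$, and $\phi_W$ is $\operatorname{SL}(V)$-invariant since it is defined by $\operatorname{SL}(V)$-invariant sections. Restricting to the open subscheme $R^{ss}(c,\delta)\subset R^{\mu ss}(c,\delta)$ yields an $\operatorname{SL}(V)$-invariant morphism $\phi_W|_{R^{ss}}\colon R^{ss}(c,\delta)\to\PP(W)$. For $m\gg0$ the locus $R^{ss}(c,\delta)$ is exactly the set of $\operatorname{SL}(V)$-semistable points of $Y$ for the Huybrechts--Lehn linearization, and the associated good quotient $q\colon R^{ss}(c,\delta)\to M^{ss}(c,\delta)=R^{ss}(c,\delta)/\!\!/\operatorname{SL}(V)$ is a projective scheme \cite{HL1}. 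Since a good quotient is a categorical quotient in the category of schemes, the invariant morphism $\phi_W|_{R^{ss}}$ factors uniquely as $\phi_W|_{R^{ss}}=\bar\phi\circ q$ for some $\bar\phi\colon M^{ss}(c,\delta)\to\PP(W)$ (equivalently, $\LL(n_1,n_2)$ descends to a line bundle on $M^{ss}(c,\delta)$ and $\bar\phi$ is the associated morphism). As $M^{ss}(c,\delta)$ is projective and $\PP(W)$ is separated, $\bar\phi$ is proper, so its image is closed; hence
$$\phi_W\big(R^{ss}(c,\delta)\big)=\bar\phi\big(M^{ss}(c,\delta)\big)$$
is closed in $\PP(W)$.

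It then remains to check that the larger set $M_W=\phi_W(R^{\mu ss}(c,\delta))$ coincides with this closed image. By definition $R^{\mu ss}(c,\delta)=\overline{R^{ss}(c,\delta)}$, so continuity of $\phi_W$ gives
$$M_W=\phi_W\big(\overline{R^{ss}(c,\delta)}\big)\subseteq\overline{\phi_W\big(R^{ss}(c,\delta)\big)}=\phi_W\big(R^{ss}(c,\delta)\big),$$
the final equality holding because that set is already closed, while the reverse inclusion is trivial from $R^{ss}(c,\delta)\subseteq R^{\mu ss}(c,\delta)$. Thus $M_W=\phi_W(R^{ss}(c,\delta))$ is closed in $\PP(W)$ and is a projective scheme. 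The step requiring the most care — and the one to extract from \cite{HL1} — is the factorization through a \emph{projective} GIT quotient: namely, that the Gieseker-semistable pairs making up $R^{ss}(c,\delta)$ are precisely the GIT-semistable points of $Y$ for the relevant linearization, and that the resulting good quotient $M^{ss}(c,\delta)$ is projective, so that the universal property of categorical quotients applies to $\phi_W|_{R^{ss}}$. Everything else is formal manipulation of images and closures.
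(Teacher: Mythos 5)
Your proof is correct, but it follows a genuinely different route from the paper's. The paper proves this proposition as in \cite[Prop.~8.2.5]{HLbook}: properness of $M_W$ is verified via the valuative criterion, the key ingredient being semistable reduction for $\mu$-semistable framed sheaves over a discrete valuation ring --- the Langton-type Proposition \ref{Langton}, whose proof (together with its auxiliary Lemma \ref{Langton0}) is the real work of this part of the paper. You avoid Langton altogether: you factor the $\operatorname{SL}(V)$-invariant morphism $\phi_W|_{R^{ss}(c,\delta)}$ through the good (hence categorical and surjective) quotient $R^{ss}(c,\delta)\to M^{ss}(c,\delta)$, use projectivity of $M^{ss}(c,\delta)$ from \cite{HL1} to see that $\phi_W(R^{ss}(c,\delta))$ is closed in $\PP(W)$, and then use that $R^{ss}(c,\delta)$ is dense in $R^{\mu ss}(c,\delta)$ --- which holds here by the paper's very definition of $R^{\mu ss}(c,\delta)$ as the closure of $R^{ss}(c,\delta)$ in $\widetilde{R}^{\mu ss}(c,\delta)$ --- to conclude $M_W=\phi_W(R^{ss}(c,\delta))$. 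All your inputs (that $R^{ss}\to M$ is a categorical quotient, that $M$ is projective, that $m$ is large enough for the GIT identification of $R^{ss}$ inside $Y$) are facts from \cite{HL1} that the paper itself invokes later when constructing $\gamma\colon M\to M^{\mu ss}$ and asserting that $\gamma$ is projective, so there is no circularity. The trade-off: your argument is much shorter, but it is wedded to the closure definition of $R^{\mu ss}(c,\delta)$; it would not apply to the full locally closed locus $\widetilde{R}^{\mu ss}(c,\delta)$ of $\mu$-semistable points, where the Gieseker-semistable locus need not be dense --- that is the situation in \cite{HLbook}, and it is why the Langton-type theorem is indispensable there, besides being a result of independent interest. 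One small inaccuracy to fix: your parenthetical claim that $\LL(n_1,n_2)$ ``descends'' to a line bundle on $M^{ss}(c,\delta)$ is neither needed nor automatic (descent of a linearized line bundle through a GIT quotient requires Kempf's criterion that stabilizers of points with closed orbit act trivially on the fibres); what you actually use --- factorization of the invariant \emph{morphism} through the categorical quotient --- is the correct statement.
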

The proof of this Proposition goes as in \cite[Prop. 8.2.5]{HLbook}, by
using the following Proposition, which generalizes  a classical result by Langton.
\begin{prop}\label{Langton}
Let $(R,\mathfrak m=(\pi))$ be a discrete valuation ring with residue field $k$ and quotient field $K$
and let $X$ be a smooth projective surface over $k$. Let $\mathcal{E}=(\mathcal{E},\alpha)$
be an $R$-flat family of framed sheaves on $X$ such that $\mathcal{E}_K=K\otimes_R\mathcal{E}$ is a $\mu$-semistable
framed sheaf. Then there is a framed sheaf $(E,\alpha^E)$ such that
$(E_K,\alpha^E_K)=(\mathcal{E}_K,\alpha_K)$ and
$(E_k,\alpha^E_k)$ is $\mu$-semistable.
\end{prop}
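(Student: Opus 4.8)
The plan is to adapt Langton's method of elementary modifications, in the form used for the $\mu$-semistable case in \cite[Thm.~2.B.1]{HLbook}, to the framed setting, the genuinely new ingredient being the bookkeeping of the framing across the modifications. Writing $S=\Spec R$ and regarding the framing of the statement as a nonzero morphism $\Theta\colon\E\to\mathcal{O}_S\boxtimes\F$ on $S\times X$ whose generic fibre represents $[\alpha_K]$, I would construct a descending chain of $R$-flat coherent subsheaves $\E=\E^0\supset\E^1\supset\cdots$, all with the same generic fibre $\E^n_K=\E_K$, each equipped with a framing, and show that after finitely many steps the closed fibre becomes $\mu$-semistable.

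For the modification step, suppose $\E^n_k$ is not $\mu$-semistable as a framed sheaf. Using the framed Harder--Narasimhan theory (the analogue for framed sheaves of the usual maximal destabilizing subsheaf, developed in \cite{Sala1}), I would take the maximal $\mu$-destabilizing framed subsheaf $B_n\subset\E^n_k$ with its induced framing, set $G_n:=\E^n_k/B_n$, and put
$$\E^{n+1}:=\ker\bigl(\E^n\twoheadrightarrow\E^n_k\twoheadrightarrow G_n\bigr).$$
As $G_n$ is supported on the closed fibre, $\E^{n+1}$ is again $R$-flat with $\E^{n+1}_K=\E_K$, and the standard $\Tor$-computation (using $\Tor_1^R(G_n,k)\cong G_n$) gives
$$0\to G_n\to\E^{n+1}_k\to B_n\to 0,$$
so that the low-slope part $G_n$ becomes a subsheaf and the high-slope part $B_n$ a quotient of the new closed fibre. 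The framing on $\E^{n+1}$ is obtained by restricting $\Theta$; since $\mathcal{O}_S\boxtimes\F$ is $R$-torsion free, whenever this restriction has image in $\pi^j(\mathcal{O}_S\boxtimes\F)$ I would divide by the maximal such $\pi^j$, producing a framing $\alpha^{n+1}$ that is nonzero on $\E^{n+1}_k$ and whose \emph{projective} class on the generic fibre is still $[\alpha_K]$.

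Termination is the heart of the matter, and here I would run the classical dichotomy phrased in terms of the framed slope. One first checks that the framed $\mu_{\max}(\E^n_k)$ is non-increasing in $n$; since the family $\{\E^n_k\}$ stays bounded --- this is where Proposition~\ref{bounded}, combined with the inclusions $\pi\E^n\subset\E^{n+1}\subset\E^n$, keeps ranks and degrees under control --- these framed slopes lie in a discrete, bounded-below subset of $\QQ$ and must eventually be constant, say equal to $\mu_0$. In this constant regime the maximal destabilizing framed subsheaves $B_n$ all have framed slope $\mu_0>\mu_{\max}(G_n)$, hence $\Hom(B_{n+1},G_n)=0$ and the composite $B_{n+1}\hookrightarrow\E^{n+1}_k\to B_n$ is injective; this exhibits the $B_n$ as an essentially decreasing chain of lattices inside the fixed generic fibre and lets one assemble from them a framed subsheaf of $(\E_K,\alpha_K)$ whose framed slope $\ge\mu_0$ strictly exceeds $\mu(\E_K,\alpha_K)$, contradicting the assumed $\mu$-semistability of $\E_K$. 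Hence the chain stabilizes, and the resulting pair $(E,\alpha^E):=(\E^N,\alpha^N)$ is the desired framed sheaf.

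The step I expect to be the main obstacle is precisely the framing bookkeeping inside the termination argument: because the framed slope carries the extra term $-\varepsilon(\alpha)\delta_1/\rk$, one must track whether $\alpha|_{B_n}$ vanishes and control how the normalization by powers of $\pi$ interacts both with the monotonicity of $\mu_{\max}$ and with the final slope comparison on the generic fibre. A secondary difficulty, special to $d=2$, is that the closed fibres $\E^n_k$ may acquire torsion, so the destabilizing subsheaves must be taken \emph{framed-saturated} in the sense used in Proposition~\ref{openness}, and every slope estimate must be written via the hat-slope in order to disregard the lower-dimensional torsion, in particular that supported along $D$.
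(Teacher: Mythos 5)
Your overall strategy---framed elementary modifications, the $\Tor$ computation giving the exact sequences $0\to G_n\to\E^{n+1}_k\to B_n\to 0$, monotonicity and discreteness of the destabilizing framed slopes, and the eventual injections $B_{n+1}\hookrightarrow B_n$ (modulo dimension-zero sheaves)---matches the paper's proof up to and including the stabilization of slopes. But the final step, where the actual contradiction with the $\mu$-semistability of $\E_K$ must be produced, has a genuine gap. You claim the $B_n$ form ``a decreasing chain of lattices inside the fixed generic fibre'' from which one can ``assemble'' a destabilizing framed subsheaf of $(\E_K,\alpha_K)$. This does not parse: each $B_n$ is a subsheaf of the closed fibre $\E^n_k$, i.e.\ a sheaf on $X$ itself, not a lattice in anything defined over $K$. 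The objects that do live over $R$ are the preimages of the $B_n$ in $\E^n$, which are exactly the $\E^{n+1}$; these all have full rank and satisfy $\E^{n+1}_K=\E_K$, so the chain by itself produces no subsheaf of $\E_K$ of rank $\rk B_n$. Any limit construction (e.g.\ intersecting the $\E^n$) only behaves after base change to the completion $\hat R$, and then yields a destabilizing object over $\hat K$ or some extension $K'\supset K$ rather than over $K$, so one additionally needs invariance of framed $\mu$-semistability under field extensions to conclude. None of this mechanism is supplied in your sketch, and it is precisely the heart of Langton's theorem.

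The paper closes this gap by a route requiring two ingredients you omit. First, after slopes and ranks stabilize one shows the chain stabilizes \emph{completely}: the $G^n$ form an increasing sequence of purely $2$-dimensional sheaves with constant rank, slope and $\varepsilon(\alpha_{G^n})$, hence with isomorphic reflexive hulls, so eventually $G^n\cong G^{n+1}$; then the exact sequences split and $\E^n_k\cong B\oplus G$ for all $n$. Second, this splitting is what makes the quotients $Q^n=\E/\E^n$ flat over $R/\pi^n$ with constant fibre $G$, i.e.\ compatible $(R/\pi^n)$-points of $\Quot_{X_R/R}(\E,P(G))$; since this Quot scheme is proper over $\Spec(R)$ and its image contains $\Spec(R/\pi^n)$ for every $n$, the image is all of $\Spec(R)$, so $\E_{K'}$ acquires a $\mu$-destabilizing quotient for some extension $K'\supset K$---the desired contradiction. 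Your subsheaf-limit idea is essentially Langton's original formulation and could be repaired along the same lines, but it needs exactly these two steps; it is not a consequence of the injections $B_{n+1}\hookrightarrow B_n$ alone. Two smaller omissions: the paper first proves an auxiliary lemma replacing $(\E,\alpha)$ by a family with $T(\ker\alpha_k)=0$, which is what allows the framed Harder--Narasimhan theory of \cite{Sala1} to produce a $\mu$-semistable, framed-saturated maximal destabilizer (your renormalization of the framing by powers of $\pi$ does not substitute for this); and the case of rank-zero (torsion) destabilizers $B^n$ requires a separate argument using the framing (injectivity of $\alpha_{B^0}$, forcing $\alpha_{G^0}=0$), which your hat-slope remark gestures at but does not supply.
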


Before proceeding to the proof of Proposition \ref{Langton} we prove the following auxiliary Lemma.

\begin{lemma}\label{Langton0}
Let $(R,\mathfrak m=(\pi))$ be a discrete valuation ring with residue field $k$ and quotient field $K$, let
$T=\Spec(R)$ and let $X$ be a smooth projective variety over $k$.
Let $\mathcal{E}=(\mathcal{E},\alpha:\mathcal{E}\to\mathcal{F})$
be a $T$-flat family of framed sheaves on $X$ such that $(\mathcal{E}_K=K\otimes_R\mathcal{E},\alpha_K)$ is a
$\mu$-semistable framed sheaf. Then there is a framed sheaf $(\widetilde{\mathcal{E}},\widetilde{\alpha})$ such that
$(\widetilde{\mathcal{\mathcal{E}}}_K,\widetilde{\alpha}_K)=(\mathcal{E}_K,\alpha_K)$ and $\ker({\widetilde{\alpha}}_k)$ has no torsion: $T(\ker({\widetilde{\alpha}}_k))=0$.
\end{lemma}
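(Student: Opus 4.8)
The plan is to realize $(\widetilde{\mathcal E},\widetilde\alpha)$ as the result of a finite sequence of \emph{elementary modifications} of $(\E,\alpha)$ along the closed fibre, each one renormalized so that the framing survives, following the pattern of Langton's theorem (compare \cite[Thm.~2.B.1]{HLbook}). Write $\G:=\mathcal O_T\boxtimes\F$, which is $R$-flat, and view the framing as a map $\alpha\colon\E\to\G$. Assuming $B:=T(\ker\alpha_k)\neq0$, note that since $\E_k$ has positive rank while $B$ is supported in dimension $<d$, the sheaf $B$ is a proper subsheaf of $\E_k$. I then set
$$\E'\ :=\ \ker\bigl(\E\twoheadrightarrow\E_k\twoheadrightarrow\E_k/B\bigr).$$
Because $\E'\subset\E$ and $\E$ is $\pi$-torsion free, $\E'$ is again $T$-flat; moreover $\pi\E\subset\E'\subset\E$, $\E'_K=\E_K$, and a short $\Tor$-computation yields on closed fibres the exact sequence
\begin{equation*}
0\to\E_k/B\to\E'_k\xrightarrow{\ q\ } B\to0,
\end{equation*}
in which the sub $\E_k/B$ is the kernel of the reduction map $\iota_k\colon\E'_k\to\E_k$ (whose image is $B$).

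Next I would treat the framing. Since $B\subseteq\ker\alpha_k$, the image in $\E_k$ of any local section $e'$ of $\E'$ lies in $\ker\alpha_k$, whence $\alpha(e')\in\pi\G$; as $\G$ is $\pi$-torsion free, this allows me to define $\alpha':=\pi^{-1}\alpha|_{\E'}\colon\E'\to\G$. Over $K$ one has $\alpha'_K=\pi^{-1}\alpha_K$, which is proportional to $\alpha_K$ and hence defines the same framing, so $(\E'_K,\alpha'_K)=(\E_K,\alpha_K)$ as framed sheaves. Tracing the two pieces of the sequence above, for $e'=\pi e$ one gets $\alpha'(e')=\alpha(e)$, so under the identification $\E_k/B\cong\ker\iota_k$ the reduced framing $\alpha'_k$ restricts to the framing $\bar\alpha_k$ induced by $\alpha_k$; in particular $\ker\alpha'_k\cap(\E_k/B)=\ker\alpha_k/B$ is torsion free. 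It follows that $T(\ker\alpha'_k)$ injects via $q$ into $B$: the torsion of the kernel has been pushed forward into a subsheaf of the old torsion $B$.

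The remaining, and hardest, point is termination. Iterating produces a strictly descending chain of lattices $\E=\E^{(0)}\supsetneq\E^{(1)}\supsetneq\cdots$ with $\pi\E^{(n)}\subset\E^{(n+1)}$, all carrying the fixed generic framed sheaf $(\E_K,\alpha_K)$, together with renormalized framings $\alpha^{(n)}$ whose closed-fibre torsions $B^{(n)}=T(\ker\alpha^{(n)}_k)$ satisfy $T(\ker\alpha^{(n+1)}_k)\hookrightarrow B^{(n)}$. I would argue that $B^{(n)}=0$ for some $n$ by a discreteness argument in the spirit of \cite[Thm.~2.B.1]{HLbook}: the numerical invariants of the $B^{(n)}$ (Hilbert polynomials, or lengths in the top torsion dimension) form a non-increasing sequence in a discrete set, so they stabilize; were they to stabilize at a nonzero value, the stabilized torsion subsheaves would be compatible under the reduction maps and assemble, over the completion $\widehat R$, into a nonzero torsion subsheaf of $\ker\alpha_K$. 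This contradicts the fact that $(\E_K,\alpha_K)$ is $\mu$-semistable, which by definition forces $\ker\alpha_K$ to be torsion free. Hence after finitely many steps $\ker\widetilde\alpha_k$ is torsion free, and $(\widetilde{\mathcal E},\widetilde\alpha):=(\E^{(n)},\alpha^{(n)})$ is the required family.

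The main obstacle is making this limiting contradiction rigorous: one must verify that the partial torsions really do glue, under the reduction maps $\iota_k$, into an honest $\pi$-adically coherent — and hence generic — subsheaf of $\ker\alpha_K$, exactly as in the classical Langton termination step. Everything else (flatness, invariance of the generic framed sheaf, and the one-step pushforward of torsion) is a routine consequence of the $\Tor$-sequence and the $\pi^{-1}$-renormalization described above.
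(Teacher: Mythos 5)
Your one-step construction is sound: $\mathcal{E}'=\ker(\mathcal{E}\to\mathcal{E}_k/B)$ is $T$-flat, the renormalization $\alpha'=\pi^{-1}\alpha|_{\mathcal{E}'}$ is legitimate because $B\subseteq\ker\alpha_k$ forces $\alpha(\mathcal{E}')\subseteq\pi\mathcal{G}$, and your identification of $\alpha'_k$ on the subsheaf $\mathcal{E}_k/B$ with the map induced by $\alpha_k$ does give $T(\ker\alpha'_k)\hookrightarrow B$. The genuine gap is exactly where you place it: termination. Two things are missing. First, the claim that the invariants of $B^{(n)}$ ``form a non-increasing sequence in a discrete set, so they stabilize'' is false as stated when the torsion has support of dimension $d-1$: a descending chain of subsheaves of a fixed torsion sheaf need not have stabilizing Hilbert polynomials (on a surface, $\mathcal{O}_D(-nP)\subset\mathcal{O}_D$ is strictly descending with $\chi$ going to $-\infty$); only the leading coefficient stabilizes by discreteness. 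To finish one must pass to the \emph{increasing} chain $G^{(n)}=\mathcal{E}^{(n)}_k/B^{(n)}$ (the injections $G^{(n)}\hookrightarrow G^{(n+1)}$ exist because $G^{(n)}\cap B^{(n+1)}=0$), note that the degrees $\deg B^{(n)}$ are non-increasing non-negative integers and hence stabilize, so the quotients $B^{(n)}/B^{(n+1)}$ eventually have codimension-two support, whence the reflexive hulls $(G^{(n)})^{\dual\dual}$ become constant, and then invoke Noetherianity of the chain of subsheaves of that fixed sheaf; this is precisely the mechanism of the paper's Proposition \ref{Langton}, case (ii). Second, the ``assembly over $\widehat{R}$'' requires the splitting $\mathcal{E}^{(n+1)}_k\cong G^{(n)}\oplus B^{(n+1)}$ after stabilization, flatness of $Q^{(n)}=\mathcal{E}/\mathcal{E}^{(n)}$ over $R/\pi^n$ via the local flatness criterion, properness of $\Quot_{X_R/R}(\mathcal{E},P(G))$ to produce a limit quotient over some extension $K'\supset K$, and then --- crucially --- a proof that the limit kernel lies in $\ker\alpha_{K'}$. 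This last point cannot be waved away: framed $\mu$-semistability allows $\mathcal{E}_K$ itself to have torsion (only torsion inside $\ker\alpha_K$ is forbidden), so a torsion subsheaf of $\mathcal{E}_{K'}$ is not by itself a contradiction. It does follow from your renormalization, since inductively $\alpha(\mathcal{E}^{(n)})\subseteq\pi^n\mathcal{G}$ and Krull's intersection theorem kills the limit under $\alpha$, but this is an essential step you have not carried out.

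You should also be aware that the paper proves this lemma with no iteration at all, by a quite different and much shorter route: it applies the Huybrechts--Lehn trick, replacing $(\mathcal{E},\alpha)$ by the fibre product $\widehat{E}=\mathcal{E}\times_{\mathcal{F}}\widehat{\mathcal{F}}$ along a surjection $\widehat{\mathcal{F}}\to\mathcal{F}$ from a locally free sheaf, so that $T(\widehat{E}_k)=T(\ker\alpha_k)$; it then kills this torsion in a single step by Simpson's pushforward argument, setting $E'=j_*(\widehat{E}|_U)$ where $U$ is the complement of the support of $T(\ker\alpha_k)$, which is $T$-flat with torsion-free special fibre; finally the framing is transported back using normality of $\widehat{\mathcal{F}}$ and taking the cokernel of $\mathcal{B}=\ker(\widehat{\mathcal{F}}\to\mathcal{F})\to E'$. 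The Langton-style descent you propose is reserved in the paper for the subsequent Proposition \ref{Langton} (restoring $\mu$-semistability of the special fibre), where it is unavoidable; so your route, once completed, would in effect re-prove that harder termination argument for a statement the paper obtains in one stroke.
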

\begin{proof}
If $T(\ker(\alpha_k))=0$, then set $(\widetilde{\mathcal{E}},\widetilde{\alpha})=(\mathcal{E},\alpha)$, and we are done.
Thus assume that $T(\ker\alpha_k)\ne0$.
Choose an epimorphism
$\epsilon:\widehat{\mathcal{F}}\to\mathcal{F}$, where $\widehat{\mathcal{F}}$ is a locally-free
$\mathcal{O}_{T\times X}$-sheaf, so that $\mathcal{B}:=\ker(\epsilon)$ is torsion free. By the
same trick of Huybrechts and Lehn as was used in the proof of Proposition \ref{bounded}, we obtain
from $(\mathcal{E},\alpha)$ a framed sheaf $(\widehat{E},\widehat{\alpha})$ on $T\times X$
such that $T(\widehat{E})=T(\ker\alpha)$. As $\widehat{\mathcal{F}}$ is locally free,
tensoring with $k$ or $K$ commutes with the construction of $\widehat{E}$,
so that $T(\widehat{E}_k)=T(\ker(\alpha_k))$ and $T(\widehat{E}_K)=T(\ker(\alpha_K))$.
By the $\mu$-semistability of
$(\mathcal{E}_K,\alpha_K)$, we have $T(\ker(\alpha_K))=0$, so $\widehat{E}_K$ is torsion free.
One also easily verifies that $\ker(\alpha_k)=\ker(\widehat{\alpha}_k)$.

Let $Y$ be the support of
$T(\ker(\alpha_k)) = T(\ker(\widehat{\alpha}_k))$ in $T\times X$, and let $E':= j_*(\widehat{E}|_U)$,
where $U = T\times X - Y$, and
$j : U\to T\times X$ is the natural inclusion.
Then $E'$ has no $T$-torsion and is $T$-flat. In particular,
the fibre $E'_k$ has the same Hilbert polynomial as $E_k$.
The canonical morphism $\widehat{E}\to E'$ induces a
morphism $\phi: \widehat{E}_k\to E'_k$ which is an isomorphism outside $Y$. Since $\widehat{\mathcal{F}}$ is
locally free, hence normal, $\widehat{\alpha}$ defines a framing $\alpha':E'\to\widehat{\mathcal{F}}$ which
coincides with $\widehat{\alpha}$ on $U$. Now let $\widetilde{\mathcal{E}}$ be the cokernel of the composition
$\mathcal{B}\to\widehat{E}\xrightarrow{can}E'$, together with the induced framing
$\widetilde{\alpha}:\widetilde{\mathcal{E}}\to\mathcal{F}$. Then $(E'_k, \alpha'_k)$ is exactly the framed sheaf constructed from $(\widetilde{\mathcal{E}}_k,\widetilde{\alpha}_k)$ by the Huybrechts--Lehn trick
via the surjection $\widehat{\mathcal{F}}_k\to{\mathcal{F}}_k$, so $T(E'_k)=T(\ker{\alpha'}_k)$, and as above, we deduce that $\ker(\widetilde{\alpha}_k)=\ker({\alpha'}_k)$.

By construction, $(\widetilde{\mathcal{\mathcal{E}}}_K,\widetilde{\alpha}_K)=(\mathcal{E}_K,\alpha_K)$.
The same argument as in the proof of Lemma
1.7 in \cite{Sim} shows that $E'_k$ is torsion free, hence
$T(\ker(\widetilde{\alpha}_k))=T(\ker({\alpha'}_k))=0$, and we are done.
\end{proof}

\textit{Proof of Proposition \ref{Langton}.}
If $(\mathcal{E}_k,\alpha_k)$ is $\mu$-semistable, then we are done. Assume that this is not the case.
By Lemma \ref{Langton0}, we may also assume that $T(\ker{{\alpha}}_k)=0$.
Setting $(\mathcal{E}^0,\alpha^0)=({\mathcal{E}},{\alpha})$,
we will define a descending sequence of framed sheaves
$({\mathcal{E}},{\alpha})=(\mathcal{E}^0,\alpha^0)\supset
(\mathcal{E}^1,\alpha^1)\supset(\mathcal{E}^2,\alpha^2)\supset...$, such that
$\mathcal{E}^n_K=\mathcal{E}_K $ and $(\mathcal{E}^n_k,\alpha^n_k)$ is not $\mu$-semistable for all $n$.

Let $(B^0,\alpha_{B^0})$ be the maximal $\mu$-destabilizer of
$(\mathcal{E}^0_k,\alpha^0_k)$, where $\alpha_{B^0}$ is the induced framing.
As $T(\ker\alpha^0_k)=0$, it follows from \cite[Theorem 6.6]{Sala1} that $B^0$ is $\mu$-semistable and  framed-saturated.

Suppose that, for $n\ge0$, the framed sheaf $(\mathcal{E}^n,\alpha^n)$ and its
saturated maximal $\mu$-destabilizer $(B^n,\alpha_{B^n})$ have been defined.
Let $G^n = \mathcal{E}^n_k/B^n$ together with the induced framing $\alpha_{G^n}:G^n\to\mathcal{F}_k$,
and let $\mathcal{E}^{n+1}$ be the kernel of the composition
$\mathcal{E}^n\to\mathcal{E}^n_k\to G^n$
with the induced framing $\alpha^{n+1}:\mathcal{E}^{n+1}\to\mathcal{F}$.
As a subsheaf of an $R$-flat sheaf, $\mathcal{E}^{n+1}$ is $R$-flat. We thus have two exact sequences
\begin{equation}\label{2 triples}
0\to B^n\to \mathcal{E}^n_k\to G^n\to 0\ \ \ \text{and}\ \ \ 0\to G^n\to\mathcal{E}^{n+1}_k\to B^n\to 0.
\end{equation}
To obtain the second one, remark that $\Tor_1^R(G^n,k)\simeq G^n$ and
$\Tor_1^R(\mathcal{E}^{n},k)=0$ and apply $\cdot\otimes_R k$ to the exact triple $0\to\mathcal{E}^{n+1}\to \mathcal{E}^{n}\to G^n\to 0$.

By construction,
$(\mathcal{E}^{n+1}_K,\alpha^{n+1}_K)=({\mathcal{\mathcal{E}}}_K,{\alpha}_K)$
and $(\mathcal{E}^{n+1}_k,\alpha^{n+1}_k)$ is not $\mu$-semistable.
Let $(B^{n+1},\alpha_{B^{n+1}})$ be the maximal $\mu$-destabilizer of
$(\mathcal{E}^{n+1}_k,\alpha^{n+1}_k)$.
Let $C^n:=G^n\cap B^{n+1}$ and let
$\alpha_{C^n}:C^n\hookrightarrow G^n\overset{\alpha_{G^n}}\to\mathcal{F}_k$
be the induced framing.
Consider the two possible cases: (i) $\rk(B^n)>0$ and (ii) $\rk(B^n)=0$.

(i) $\rk(B^n)>0$.
One shows that
\begin{equation}\label{rk Bn+1}
\rk(B^{n+1})>0.
\end{equation}
Indeed, suppose the contrary, that is, assume  $B^{n+1}$ is a torsion sheaf. Then
\begin{equation}\label{deg Bn+1}
\deg(B^{n+1},\alpha_{B^{n+1}})>0.
\end{equation}
As $C^n$ is either zero or a torsion subsheaf of $G^n=\mathcal{E}^n_k/B^n$, we have
\begin{equation}\label{deg Cn}
\deg(C^n,\alpha_{C^n})\le0.
\end{equation}
On the other hand, the second triple in (\ref{2 triples}) shows that $B^{n+1}/C^n\hookrightarrow B^n$, hence by the
$\mu$-semistability of $(B^n,\alpha_{B^n})$, we have $\deg(B^{n+1}/C^n,{\alpha^n}')\le0$, where ${\alpha^n}'$ is the induced
framing. This
inequality, together with (\ref{deg Cn}), contradicts  (\ref{deg Bn+1}), which proves (\ref{rk Bn+1}). We thus may
assume that $(B^{n+1},\alpha_{B^{n+1}})$ is a $\mu$-semistable framed-saturated subsheaf of
$(\mathcal{E}^{n+1}_k,\alpha^{n+1}_k)$.
From (\ref{2 triples})--(\ref{rk Bn+1}) it follows that
$\mu(\mathcal{E}^{n+1}_k,\alpha^{n+1}_k)=\mu(\mathcal{E}^n_k,\alpha^n_k)=...=
\mu({\mathcal{E}}_k,{\alpha}_k)$.

Assume now that $\rk(B^{n+1}/C^n)=0$. Then $\rk C^n=\rk B^{n+1}>0$ and the inclusion $C^n\hookrightarrow G^n$ implies
$\mu(C^n,\alpha_{C^n})\le\mu_{\max}(G^n,\alpha_{G^n})<\mu(\mathcal{E}^n_k,\alpha^n_k)\le\mu(B^{n+1},\alpha_{B^{n+1}})$.
Hence $\deg(B^{n+1}/C^n,{\alpha^n}')>0$, contrary to the fact that $(B^{n+1}/C^n,{\alpha^n}')$ is a torsion subsheaf of
the $\mu$-semistable sheaf  $(B^n,\alpha_{B^n})$. Hence $\rk(B^{n+1}/C^n)>0$.
Therefore, since both $(B^n,\alpha_{B^n})$ and $(B^{n+1},\alpha_{B^{n+1}})$ are
$\mu$-semistable and $B^{n+1}/C^n\hookrightarrow B^n$, it follows that
$$
\mu(B^n,\alpha_{B^n})\ge\mu(B^{n+1},\alpha_{B^{n+1}}).
$$
In particular, $\beta_n=\mu(B^n,\alpha_{B^n})-\mu({\mathcal{E}}_k,{\alpha}_k)=
\mu(B^n,\alpha_{B^n})-\mu(\mathcal{E}^n_k,\alpha^n_k)$ is a positive rational number.
As $\{\beta_n\}_{n\ge1}$ is a descending sequence of strictly positive numbers in the lattice
$\frac{1}{r!}\mathbb{Z}\subset\mathbb{Q}$, where $r=\rk\mathcal{E}_k$, it is stationary.
We may assume without loss of generality that $\beta_n$ is constant for all $n$.
Then we have $C^n:=G^n\cap B^{n+1}=0$ in $\text{Coh}_{2,1}(X)$ for all $n$, where $\text{Coh}_{2,1}(X)$ is
the category of coherent sheaves on $X$ modulo sheaves of dimension 0. In particular, there are inclusions $B^{n+1}\subset B^n$ and $G^n\subset G^{n+1}$  in $\text{Coh}_{2,1}(X)$.  Hence there is an integer $n_0$
such that for all $n\ge n_0$ we have $\mu(B^n,\alpha_{B^n})=\mu(B^{n+1},\alpha_{B^{n+1}})=...,\
\rk{B^n}=\rk{B^{n+1}}=...$, and
we may assume that $n_0=0$.
In view of (\ref{2 triples})
\begin{equation}\label{Gn}
G^0\subset G^1\subset...
\end{equation}
is an increasing sequence of purely 2-dimensional sheaves such that
\begin{equation}\label{mu Gn}
\mu(G^0,\alpha_{G^0})=\mu(G^1,\alpha_{G^1})=...,\ \rk G^0=\rk G^1=...,\ \varepsilon(\alpha_{G^0})=
\varepsilon(\alpha_{G^1})=...
\end{equation}
This means that the Hilbert polynomials of the sheaves $G^i,i\ge0,$ coincide modulo constant terms. Equivalently, these
sheaves are isomorphic in dimension $\ge1$. In particular, their reflexive hulls $(G^n)^{\dual\dual}$ are
all isomorphic. Therefore, we may consider $\{G^n\}_{n\ge1}$ as a sequence of subsheaves in some
fixed coherent sheaf. As an immediate consequence, we obtain that all the injections are eventually
isomorphisms. Thus we may assume that $G^n\cong G^{n+1}$ for all $n\ge0$. This implies that the
short exact sequences (\ref{2 triples}) split, and we have $B^n = B,\ G^n = G$ and
$\mathcal{E}^n_k=B\oplus G$ for all $n$. Define $Q^n=\mathcal{E}/\mathcal{E}^n,\ n\ge0$. Then $Q^n_k=G$
and there are short exact sequences $0\to G\to Q^{n+1}\to Q^n\to0$ for all $n$. It follows from the local flatness
criterion \cite[Lemma 2.1.3]{HLbook} that $Q^n$ is an $R/\pi^n$-flat quotient of $\mathcal{E}/\pi^n\mathcal{E}$
for each $n$.  Hence the
image of the proper morphism $\sigma:\text{Quot}_{X_R/R}(\mathcal{E},P(G))\to\Spec(R)$ contains the closed subscheme $\Spec(R/(\pi)^n)$ for all $n$.  But this is only possible if $\sigma$ is surjective, so that $\mathcal{E}_{K'}$
must also admit a ($\mu$-destabilizing!) quotient with Hilbert polynomial $P(G(m))=\chi(G(m))$ for some field extension
$K'\supset K$. This contradicts the assumption on $\mathcal{E}_K$ .

(ii) $\rk(B^n)=0$. By (i), we obtain
$\rk(B^0)=\rk(B^1)=...=\rk(B^n)=0$. As $B^0$ is $\mu$-semistable,
it follows that $T(\ker\alpha_{B^0})=0$, i.e., $\alpha_{B^0}$ is injective. Then by the definition of $G^0$ we have
$\alpha_{G^0}=0$, i.e., $G^0$ has an ordinary $\mu$-Harder-Narasimhan filtration with $\mu$-semistable factors of
positive ranks. As $C^0$ is a torsion subsheaf of $G^0$, it follows that $C^0=0$. Hence, $B^1\hookrightarrow B^0$.
Repeating this argument we eventually obtain $B^{n+1}\hookrightarrow B^n\hookrightarrow...\hookrightarrow B^0$ and $\rk(B^{i})=0$ for all $i=0,\ldots,i+1$.

Again we obtain a decreasing sequence
$...\hookrightarrow B^n\hookrightarrow...\hookrightarrow B^0$ of subsheaves of the torsion sheaf $B^0$, which stabilizes in
$\text{Coh}_{2,1}(X)$ and corresponds to an increasing sequence (\ref{Gn}) of purely 2-dimensional sheaves
$G^i$
satisfying (\ref{mu Gn}). We conclude by the same argument as in~(i).
\qed

By using Proposition \ref{MW proj} and Theorem 3.3, and proceeding as in \cite{HLbook}, Proposition
8.2.6, we can prove the  following result.

\begin{prop}\label{fin gen ring}
There is an integer $N>0$ such that $\bigoplus_{l\ge0}W_{lN}$ is a finitely generated graded ring.
\end{prop}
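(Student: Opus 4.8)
The plan is to mimic the proof of Proposition 8.2.6 in \cite{HLbook}. Write $\LL:=\LL(n_1,n_2)$ and fix once and for all an integer $N=\nu$ large enough that, by Proposition \ref{gener}, $\LL^N$ is generated by its $\operatorname{SL}(V)$-invariant global sections; all the powers $\LL^{lN}$, $l\ge1$, are then generated by invariant sections as well. For every finite-dimensional $\operatorname{SL}(V)$-invariant subspace $W\subset W_N=H^0(R^{\mu ss}(c,\delta),\LL^N)^{\operatorname{SL}(V)}$ that already generates $\LL^N$, the induced $\operatorname{SL}(V)$-invariant morphism is $\phi_W\colon R^{\mu ss}(c,\delta)\to\PP(W)$, with $\phi_W^\ast\cO_{\PP(W)}(1)=\LL^N$, and by Proposition \ref{MW proj} the image $M_W=\phi_W(R^{\mu ss}(c,\delta))$ is a projective scheme on which $\cO_{M_W}(1):=\cO_{\PP(W)}(1)|_{M_W}$ is very ample.

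First I would record that these data organize into an inverse system. If $W\subset W'$ are two such subspaces, the inclusion induces a linear projection $\PP(W')\dashrightarrow\PP(W)$; since $W$ already generates $\LL^N$, the centre of this projection misses $M_{W'}$, so the projection restricts to a morphism $p_{W'W}\colon M_{W'}\to M_W$ with $p_{W'W}\circ\phi_{W'}=\phi_W$. A linear projection that is everywhere defined on a projective subscheme is finite; hence each $p_{W'W}$ is a finite surjective morphism of projective schemes, and consequently all the $M_W$ have one and the same dimension.

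The heart of the matter is to show that this inverse system stabilizes, i.e. that there is a $W_0$ for which $p_{W'W_0}$ is an isomorphism for every $W'\supset W_0$. The point is that the coherent algebras $(p_{W'W})_\ast\cO_{M_{W'}}$ form an increasing chain of subalgebras of a fixed coherent sheaf, so by Noetherianity the chain is stationary and the transition morphisms are eventually isomorphisms; equivalently, the subalgebras $(\phi_W)_\ast\cO_{R^{\mu ss}(c,\delta)}$ stabilize. Denote by $\bar M$ the resulting stable projective scheme and by $\cO_{\bar M}(1)$ the corresponding very ample line bundle, so that one has $\phi\colon R^{\mu ss}(c,\delta)\to\bar M$ with $\phi^\ast\cO_{\bar M}(1)=\LL^N$. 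I expect this stabilization to be the main obstacle: it is exactly here that the properness supplied by the Langton-type Proposition \ref{Langton}, through Proposition \ref{MW proj}, is indispensable, as it guarantees that the images $M_W$ are genuinely projective and prevents invariant sections from escaping to infinity on the non-proper scheme $R^{\mu ss}(c,\delta)$.

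It remains to identify the ring. Because the system is stationary, the limiting algebra is $\cO_{\bar M}$ on the level of invariants, so by the projection formula pullback along $\phi$ yields, in each degree $l$, an isomorphism $H^0(\bar M,\cO_{\bar M}(l))\isoto W_{lN}$ that is compatible with multiplication of sections. Hence $\bigoplus_{l\ge0}W_{lN}\cong\bigoplus_{l\ge0}H^0(\bar M,\cO_{\bar M}(l))$ is the homogeneous coordinate ring of the projective scheme $\bar M$ with respect to the ample line bundle $\cO_{\bar M}(1)$, and such a ring is finitely generated over $\Bbbk$. This proves the proposition, possibly after replacing $N$ by a multiple of itself to secure generation already in degree one.
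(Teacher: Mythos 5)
Your skeleton is the one the paper intends: its proof is essentially a pointer to \cite[Prop.~8.2.6]{HLbook}, whose mechanism is exactly the projective images $M_W$ of Proposition \ref{MW proj} together with the finite linear projections between them. However, the two steps that you yourself call the heart of the matter contain genuine gaps. For the stabilization, you assert that the algebras $(p_{W'W})_*\mathcal{O}_{M_{W'}}$ form an increasing chain of coherent subalgebras ``of a fixed coherent sheaf'' and conclude by Noetherianity. There is no such fixed coherent sheaf: the only natural ambient object is $\phi_{W*}\mathcal{O}_{R^{\mu ss}(c,\delta)}$, and this is merely quasi-coherent, because $\phi_W$ is \emph{not} proper ($R^{\mu ss}(c,\delta)$ is quasi-projective but not projective --- this non-properness is the whole difficulty of the construction). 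Increasing chains of coherent subsheaves of a quasi-coherent sheaf need not stabilize (e.g.\ $\mathcal{O}_{\PP^1}(l\,p)$, $l\ge 0$, inside the constant sheaf of rational functions). What actually bounds the chain is the finiteness of the projections: all the algebras $(p_{W'W_0})_*\mathcal{O}_{M_{W'}}$ lie in the integral closure of $\mathcal{O}_{M_{W_0}}$ in a fixed \emph{finite} extension of $K(M_{W_0})$, namely the relative algebraic closure of $K(M_{W_0})$ inside the function field of (a component of) $R^{\mu ss}(c,\delta)$, and by Noether's theorem on finiteness of integral closure that ambient algebra is coherent. This integrality input, which is where the argument of \cite[Prop.~8.2.6]{HLbook} really lives, is absent from your proof.

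Second, and independently: your inverse system ranges only over subspaces $W'\subset W_N$, so even if stabilization were established it would say nothing about the higher pieces $W_{lN}$, $l\ge 2$. Your final claim that pullback gives isomorphisms $H^0(\bar M,\mathcal{O}_{\bar M}(l))\cong W_{lN}$ is precisely the assertion that every invariant section of $\mathcal{L}(n_1,n_2)^{lN}$ descends to $\bar M$, i.e.\ that $\bigoplus_l W_{lN}$ is the section ring of an ample line bundle on a projective scheme; that is (up to a Veronese twist) the statement being proved, and nothing you established implies it, since an element $t\in W_{lN}$ need not be a polynomial in elements of $W_N$. Your closing remark that one may ``replace $N$ by a multiple to secure generation in degree one'' is circular: that some Veronese subring is generated in degree one is a \emph{consequence} of finite generation, not a tool for proving it. The repair, following \cite{HLbook}: for each $l$ and each $t\in W_{lN}$, let $W_0^{\cdot l}$ denote the image of $S^lW_0$ in $W_{lN}$ and set $W''=W_0^{\cdot l}+\Bbbk t$; finiteness of the projection $M_{W''}\to M_{W_0^{\cdot l}}\cong M_{W_0}$ shows that $t$ is integral over the image of $S^\bullet W_0\to\bigoplus_l W_{lN}$; hence $\bigoplus_l W_{lN}$ is contained in the integral closure of that finitely generated subring in a finite extension of its fraction field, so it is a finite module over it, and therefore is itself a finitely generated ring.
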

We can eventually  define the Uhlenbeck-Donaldson compactification.
\begin{defin}\label{def M-muss}
Let $N$ be a positive integer as in the above proposition. Then
$M^{\mu ss}=M^{\mu ss}(c,\delta)$
is defined by
$$
M^{\mu ss}={\rm Proj}\left(\bigoplus_{k\ge0}H^0(R^{\mu ss}(c,\delta),
\mathcal{L}(n_1,n_2)^{kN})^{SL(P(m))}\right)\ .
$$
It is equipped with a natural morphism $\pi:R^{\mu ss}(c,\delta)\to M^{\mu ss}$
and is called the moduli space of $\mu$-semistable framed
sheaves.
\end{defin}

We introduce more notation. Let
$\mathbf{F}=(\mathbf{E},\mathbf{L},\alpha_{\mathbf{E}})\in\mathcal{M}^{\mu ss}(S)$ be a family of framed sheaves.
Consider the scheme
$\widetilde{S}:=\mathrm{\mathbb{I}som}(V\otimes\mathcal{O}_S,\mathrm{pr}_{1*}\mathbf{E})\overset{\tau}\to S$
together with the projections $\widetilde{S}\overset{\widetilde{\mathrm{pr}}_1}\leftarrow
\widetilde{S}\times X\overset{\widetilde{\mathrm{pr}}_2}\to X$.
Let $\widetilde{\mathbf{F}}=(\widetilde{\mathbf{E}},\widetilde{\mathbf{L}},\alpha_{\widetilde{\mathbf{E}}})
\in\mathcal{M}^{\mu ss}(\widetilde{S})$ be the lifted family over $\widetilde{S}$, where
$\widetilde{\mathbf{E}}:=(\tau\times\mathrm{id}_X)^*\mathbf{E},\widetilde{\mathbf{L}}:=\tau^*\mathbf{L}$,
$\alpha_{\widetilde{\mathbf{E}}}:=\tau^*\alpha_{\mathbf{E}}:\widetilde{\mathbf{L}}\to
\widetilde{\mathrm{pr}}_{1*}\mathcal{H}om(\widetilde{\mathbf{E}},\mathcal{O}_{\widetilde{S}}\boxtimes\mathcal{F})$.
Let $\mathrm{taut}:V\otimes\mathcal{O}_{\widetilde{S}}\overset{\sim}\to\tau^*\mathrm{pr}_{1*}\mathbf{E}=
\widetilde{\mathrm{pr}}_{1*}\widetilde{\mathbf{E}}$
be the tautological isomorphism.
Applying the functor $\mathrm{pr}_{1*}$ to the morphism
$\widetilde{\alpha}_{\mathbf{E}}:\mathrm{pr}_1^*\mathbf{L}\otimes\mathbf{E}\to\mathcal{O}_S\boxtimes\mathcal{F}$
we obtain a (nowhere vanishing) morphism
$\widehat{\alpha}_{\mathbf{E}}:\mathbf{L}\otimes\mathrm{pr}_{1*}\mathbf{E}\to H^0(\mathcal{F})\otimes\mathcal{O}_S$.
Consider the composition
$a_{\widetilde{\mathbf{E}}}:V\otimes\mathcal{O}_{\widetilde{S}}\otimes\widetilde{\mathbf{L}}\overset{\mathrm{taut}}\to
\widetilde{\mathrm{pr}}_{1*}\widetilde{\mathbf{E}}\otimes
\widetilde{\mathbf{L}}\overset{\widehat{\alpha}_{\widetilde{\mathbf{E}}}}\to
H^0(\mathcal{F})\otimes\mathcal{O}_{\widetilde{S}}$
or, equivalently, the (subbundle) morphism
$a_{\widetilde{\mathbf{E}}}:\widetilde{\mathbf{L}}\to\mathcal{H}om(V\otimes\mathcal{O}_{\widetilde{S}},
H^0(\mathcal{F})\otimes\mathcal{O}_{\widetilde{S}})\cong
\mathrm{Hom}(V,H^0(\mathcal{F}))\otimes\mathcal{O}_{\widetilde{S}}$.
By the universal property of the projective space $\mathbb{P}$ the subbundle morphism $a_{\widetilde{\mathbf{E}}}$
defines a morphism $b_{\widetilde{\mathbf{E}}}:\widetilde{S}\to\mathbb{P}$.

Now we   explain in which sense $M^{\mu ss}$ is the moduli space of $\mu$-semistable framed sheaves.
In fact, though $M^{\mu ss}$ is not in general a categorial quotient of $R^{\mu ss}$,
still $M^{\mu ss}$ has the following universal property. Let $\mathcal{M}^{ss}$, respectively,
$\widetilde{\mathcal{M}}^{\mu ss}$ denote the functor which associates with $S$ the set of isomorphism
classes of $S$-flat families of semistable, respectively,   $\mu$-semistable framed sheaves of class $c$ on $X$.
Consider an open subfunctor $\mathcal{M}^{\mu ss}$ of $\widetilde{\mathcal{M}}^{\mu ss}$ which associates with $S$
the set of isomorphism classes of those families
$[\mathbf{F}]\in\widetilde{\mathcal{M}}^{\mu ss}(S)$
for which there exists a dense open subset $S'$ of $S$ such that
$[\mathbf{F}|_{S'\times X}]\in\mathcal{M}^{ss}(S')$.
Clearly, $\mathcal{M}^{ss}$ is an open subfunctor $\mathcal{M}^{\mu ss}$.

For any scheme $S$ and any family $[\mathbf{F}=(\mathbf{E},\alpha_\mathbf{E})]\in\mathcal{M}^{\mu ss}(S)$
the principal $GL(V)$-bundle $\tau:\widetilde{S}\to S$ by the universality of the Quot-scheme
$\text{Quot}(\mathcal{H},P_c)$ defines a morphism
$\Psi_{\widetilde{\mathbf{F}}}:\widetilde{S}\to\text{Quot}(\mathcal{H},P_c)$
and hence a morphism
$\Phi^{\mu}_{\widetilde{\mathbf{F}}}=(\Psi_{\widetilde{\mathbf{F}}},
b_{\widetilde{\mathbf{E}}}):\widetilde{S}\to R^{\mu ss}$.
This morphism is
$GL(V)$-invariant, and $\tau:\widetilde{S}\to S$ is a categorial quotient, so that there exists a (classifying)
morphism $\Phi_{\mathbf{F}}:S\to M^{\mu ss}$ making the following diagram commutative:
\begin{equation}\label{morphism tau}
\xymatrix{\widetilde{S}\ar[r]^-
{\Phi^{\mu}_{\widetilde{\mathbf{F}}}}\ar[d]_{\tau} & R^{\mu ss}\ar[d]^{\pi} \\
S\ar[r]^-{\Phi^{\mu}_{\mathbf{F}}} & M^{\mu ss}.}
\end{equation}

We thus obtain a natural
transformation of functors $\Phi^{\mu}:\mathcal{M}^{\mu ss}\to{\rm Mor}(-,M^{\mu ss})$
given by
$\Phi^{\mu}(S):\mathcal{M}^{\mu ss}(S)\to{\rm Mor}(S,M^{\mu ss}),\ [\mathbf{F}]\mapsto\Phi^{\mu}_{\mathbf{F}}$.

Let $M=M(c,\mathcal{F})$ denote the moduli space of semistable framed sheaves
$(E,\alpha:E\to\mathcal{F})$ on $X$ with $\ch (E)=c$.
It co-represents the
moduli functor $\mathcal{M}^{ss}=\mathcal{M}^{ss}(c,\mathcal{F})$,
namely, we have a natural transformation of functors
$\Phi:\mathcal{M}^{ss}\to {\rm Mor}(-,M)$,
$\Phi(S):\mathcal{M}(S)\to{\rm Mor}(S,M),\ [\mathbf{F}]\mapsto\Phi_{\mathbf{F}}$
\cite[Thm 0.1]{HL1}, and the above diagram extends to a commutative diagram
$$
\xymatrix{\widetilde{S}\ar@/^2pc/[rrr]^-{\Phi^{\mu}_{\widetilde{\mathbf{F}}}}\ar[rr]^-{\Phi_{\widetilde{\mathbf{F}}}}
\ar[d]_{\tau} & &
R^{ss}\ar@{^{(}->}[r]\ar[d]^{\pi} & R^{\mu ss}\ar[d]^{\pi} \\
S\ar@/_2pc/[rrr]^{\Phi^{\mu}_{\mathbf{F}}} \ar[rr]^-{\Phi_{\mathbf{F}}}
& & M\ar@{-->}[r]^-\gamma & M^{\mu ss}.}
$$
Since $\pi:R^{ss}\to M$ is a categorial quotient, it follows that there exists a morphism
$\gamma:M\to M^{\mu ss}$ such that $\Phi^{\mu}=\underline{\gamma}\cdot\Phi$, i.e.,
$\Phi^{\mu}_{\mathbf{F}}=\gamma\cdot\Phi_{\mathbf{F}}$. The morphism $\gamma$ is by construction
dominant and projective, hence it is surjective. It is also birational on the components of $M$ containing at least one locally free framed sheaf.

Note also that, for any $S$ and any $[\mathbf{F}=(\mathbf{E},\alpha_\mathbf{E})]\in\mathcal{M}^{\mu ss}(S)$
the pullback of
$\mathcal{O}_{M^{\mu ss}}(1)$ via
$\Phi^{\mu}_{\mathbf{F}}\cdot\tau$ is isomorphic to
$(\lambda_{\widetilde{\mathbf{E}}}(u_1(c))^{\otimes n_1}\otimes
b_{\widetilde{\mathbf{E}}}^*\mathcal{O}_{\mathbb{P}}(n_2))^N$. In particular, if
$[\mathbf{F}]\in\mathcal{M}^{ss}(S)$, then
\begin{equation}\label{O(1)}
(\gamma\cdot\Phi_{\mathbf{F}}\cdot\tau)^*\mathcal{O}_{M^{\mu ss}}(1)\cong
(\lambda_{\widetilde{\mathbf{E}}}(u_1(c))^{\otimes n_1}
\otimes b_{\widetilde{\mathbf{E}}}^*\mathcal{O}_{\mathbb{P}}(n_2))^N.
\end{equation}
We thus obtain:
\begin{thm}\label{gamma}
The morphism of functors
$\mathcal{M}^{ss}\to\mathcal{M}^{\mu ss}$ induces a regular
morphism of moduli spaces $\gamma:M\to M^{\mu ss}$ such
that (\ref{O(1)}) is satisfied for any $S$ and any $[\mathbf{F}]\in\mathcal{M}^{ss}(S)$.
Moreover, $\gamma$ is birational on the components of $M$ that contain at least one locally free framed sheaf.
\end{thm}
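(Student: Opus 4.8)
The statement essentially collects the constructions made in the paragraphs preceding it, so the plan is to record why the factorization $\gamma$ exists as a \emph{regular} morphism, why it is birational, and why the line-bundle identity (\ref{O(1)}) holds. First I would establish existence and regularity. The composition $R^{ss}(c,\delta)\hookrightarrow R^{\mu ss}(c,\delta)\xrightarrow{\pi}M^{\mu ss}$ is $\operatorname{SL}(V)$-invariant, since $\pi$ is the morphism attached to $M^{\mu ss}$ in Definition \ref{def M-muss}. Because $M$ co-represents $\mathcal{M}^{ss}$, equivalently $\pi\colon R^{ss}\to M$ is a categorial quotient by \cite[Thm 0.1]{HL1}, this invariant composition factors uniquely through a genuine morphism of schemes, in particular regular, $\gamma\colon M\to M^{\mu ss}$. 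The uniqueness in the categorial-quotient property simultaneously produces the equality of natural transformations $\Phi^{\mu}=\underline{\gamma}\cdot\Phi$, i.e. the commutativity of the large square preceding the theorem; this is precisely the sense in which the open inclusion of functors $\mathcal{M}^{ss}\hookrightarrow\mathcal{M}^{\mu ss}$ induces $\gamma$ on moduli.

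Next I would prove birationality by exhibiting a common dense open locus on which $\gamma$ is an isomorphism, namely the locus of $\mu$-stable framed sheaves. Over this locus the $\mu$-Jordan-H\"older filtration is trivial, so the associated graded coincides with the sheaf itself and both the $S$-equivalence used to build $M$ and the identification used to build $M^{\mu ss}$ collapse to ordinary isomorphism; moreover the corresponding points of $R^{ss}(c,\delta)$ are properly (GIT-)stable. Hence the $\mu$-stable open subset $R^{s}(c,\delta)\subset R^{ss}(c,\delta)\cap R^{\mu ss}(c,\delta)$ admits a \emph{geometric} quotient, and $\gamma$ simply identifies the copy of this quotient sitting inside $M$ with the one sitting inside $M^{\mu ss}$, so it is an isomorphism there. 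Since $R^{\mu ss}(c,\delta)$ is by definition the closure of $R^{ss}(c,\delta)$ and $R^{s}(c,\delta)$ is open and dense in $R^{ss}(c,\delta)$, its image is dense in $M^{\mu ss}$; thus $\gamma$ is an isomorphism over a dense open set, and in particular dominant and birational.

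Finally I would verify (\ref{O(1)}). The paragraph immediately preceding the theorem already computed, for every $[\mathbf{F}=(\mathbf{E},\alpha_{\mathbf{E}})]\in\mathcal{M}^{\mu ss}(S)$, the isomorphism $(\Phi^{\mu}_{\mathbf{F}}\cdot\tau)^*\mathcal{O}_{M^{\mu ss}}(1)\cong(\lambda_{\widetilde{\mathbf{E}}}(u_1(c))^{\otimes n_1}\otimes b_{\widetilde{\mathbf{E}}}^*\mathcal{O}_{\mathbb{P}}(n_2))^{N}$. Specializing to families $[\mathbf{F}]\in\mathcal{M}^{ss}(S)$ and inserting the factorization $\Phi^{\mu}=\underline{\gamma}\cdot\Phi$ turns this into exactly (\ref{O(1)}).

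I expect the genuine obstacle to be the birationality step, that is, checking that the geometric/GIT construction of $M$ and the \emph{ad hoc} $\mathrm{Proj}$-construction of $M^{\mu ss}$ really do agree over the $\mu$-stable locus; this is not purely formal because $M^{\mu ss}$ is not itself a categorial quotient of $R^{\mu ss}$. The comparison has to be transported from a general curve $C\in|kH|^{*}$ through the line-bundle isomorphism (\ref{cong}) and the GIT separation criteria of Lemmas \ref{newlemma1}--\ref{separate}, exactly as in \cite[Prop. 8.2.6]{HLbook}. Everything else---the existence of $\gamma$, its regularity, and the identity (\ref{O(1)})---is a direct consequence of the categorial-quotient property and of the computations already carried out above.
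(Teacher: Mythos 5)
Your proposal is correct and, for everything the paper actually proves, follows the same route: the paper obtains $\gamma$ exactly as you do, by noting that $R^{ss}(c,\delta)\hookrightarrow R^{\mu ss}(c,\delta)\xrightarrow{\ \pi\ }M^{\mu ss}$ is $\operatorname{SL}(V)$-invariant and invoking the categorial-quotient property of $\pi\colon R^{ss}\to M$ from \cite[Thm 0.1]{HL1} to get the factorization $\Phi^{\mu}=\underline{\gamma}\cdot\Phi$, and it deduces (\ref{O(1)}) precisely by specializing to $\mathcal{M}^{ss}(S)$ the pullback formula for $\mathcal{O}_{M^{\mu ss}}(1)$ computed just before the theorem. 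The one place you diverge is birationality: the paper offers no argument there, saying only that $\gamma$ is ``by construction dominant, birational and projective'', and defers all substantive fiber analysis to Theorem \ref{idinst} (after which it remarks that $\gamma$ is injective on $M^{\mu\mbox{\scriptsize -}\mathrm{poly}}$ and an embedding on $M^{\mu\mbox{\scriptsize -}\mathrm{stable}}$). Your sketch via the $\mu$-stable locus is the natural way to fill this gap and is consistent with the paper's later results, but it glosses over two points. First, the claim that points of $M^{\mu ss}$ lying over the $\mu$-stable locus correspond to isomorphism classes is exactly the separation statement proved later (via Lemma \ref{separate}, the restriction theorems and Theorem \ref{idinst}), not a formality --- you do flag this. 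Second, a set-theoretic bijection over that locus is weaker than the scheme-theoretic isomorphism you assert; what actually suffices for birationality (in characteristic zero, on integral schemes) is that the dominant morphism $\gamma$ be generically injective, and this in turn requires the $\mu$-stable locus to be dense --- nonemptiness of $M^{\mu\mbox{\scriptsize -}\mathrm{stable}}$ being a hypothesis the paper states explicitly only after the theorem.
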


Let now $M^{\mu\mbox{\scriptsize -}\mathrm{stable}}$, $M^{\mu\mbox{\scriptsize -}\mathrm{poly}}$
be the open subsets of $M$ corresponding to $\mu$-stable, resp. $\mu$-polystable pairs $(E,\alpha)$  with $E$ locally free.
We are assuming that $M^{\mu\mbox{\scriptsize -}\mathrm{stable}}$ is nonempty.
We shall see (Theorem \ref{idinst}) that the restriction $M^{\mu\mbox{\scriptsize -}\mathrm{poly}}\xrightarrow{\ \gamma \ }M^{\mu ss}$
is injective. Actually, when restricted to $M^{\mu\mbox{\scriptsize -}\mathrm{stable}}$, this map is an embedding, so that by taking the closure  of $\gamma(M^{\mu\mbox{\scriptsize -}\mathrm{stable}})$ in
$M^{\mu ss}$, we obtain a compactification of $M^{\mu\mbox{\scriptsize -}\mathrm{stable}}$.
By analogy with the  nonframed
case, we will call it the {\em Uhlenbeck-Donaldson compactification} of $M^{\mu\mbox{\scriptsize -}\mathrm{stable}}$.

With  reference to the notation introduced in the beginning
of Section \ref{family}, we set
\begin{multline*}
\mathcal{S}^{\mu ss}(c,\delta)^*:=\{(E,\alpha)\in\mathcal{S}^{\mu ss}(c,\delta)\ |\ E\
\text{is locally free at all points of $D$}\\
\mbox{and $\alpha$ induces an isomorphism $E|_D\simeq \F$}\},
\end{multline*}
$$
R^{\mu ss}(c,\delta)^*:=\{([g:\mathcal{H}\to E],[\alpha\circ g])\in R^{\mu ss}(c,\delta)\ | (E,\alpha)\in\mathcal{S}^{\mu ss}(c,\delta)^\ast\},$$
$$
M^{\mu ss}(c,\delta)^*:=\pi(R^{\mu ss}(c,\delta)^*)\ ,
\ \ \ \ \  M^*:=\gamma^{-1}(M^{\mu ss}(c,\delta)^*).
$$
Note that the starred versions of
$\mathcal{S}^{\mu ss}(c,\delta)$,
$ R^{\mu ss}(c,\delta)$,
and $M$ are open in the respective non-starred ones, and $M^{\mu ss}(c,\delta)^*$ is a priori only constructible in $M^{\mu ss}(c,\delta)$. Obviously, $M^{\mu\mbox{\scriptsize -}\mathrm{poly}}\subset M^*$.

We now proceed to a more detailed study of the fibers of the restriction
$
\gamma:M^*\to M^{\mu ss}(c,\delta)
$
over the points of its image $\gamma (M^*)\subset M^{\mu ss}(c,\delta)^*$.

\subsection{Description of $\boldsymbol{\gamma|_{M^*}}$}
\label{descr_mor}
Let $(E,\alpha)\in\mathcal{S}^{\mu ss}(c,\delta)^*$. Consider the {\it  graded
framed sheaf} $gr^{\mu}(E,\alpha)=(gr^{\mu}E,gr^{\mu}\alpha)$ associated with some
$\mu$-Jordan-H\"older filtration of $(E,\alpha)$. It is $\mu$-polystable
as a framed sheaf. Remark that, applying
the definition of $\mu$-semistability to $E(-D)= \ker\alpha\subset E$, one concludes
that $\delta_1\leq r\deg D$.
Moreover, in the case of equality, $(E(-D), 0)\subset (E,\alpha)$ is the
upper level of the Jordan--H\"older filtration with torsion quotient.
Under our hypotheses, this is the only possible torsion in the graded object
associated with the Jordan--H\"older filtration. To eliminate it, we impose,
from now on, the additional hypothesis $\delta_1<r\deg D$.

By taking the double dual we get a $\mu$-polystable locally-free
framed sheaf  $(gr^{\mu}E)^{\dual\dual}$.
The function
$l_E:X\to\mathbb{N}\cup\{0\}:x\mapsto\operatorname{length}\big((gr^{\mu}E)^{\dual\dual}
/gr^{\mu}E\big)_x$ can be
considered as an element in the symmetric product $S^l(X\setminus D)$ with
$l=c_2(E)-c_2((gr^{\mu}E)^{\dual\dual})$.
Both $(gr^{\mu}E)^{\dual\dual}$ and $l_E$ are well-defined invariants of $(E,\alpha)$, i.e., they do not
depend on the choice of a $\mu$-Jordan-H\"older filtration of $(E,\alpha)$.

\begin{thm}\label{idinst}
Assume that $\delta_1<r\deg D$.
Two framed sheaves
$(E_1,\alpha_1),\ (E_2,\alpha_2)$ from $\mathcal{S}^{\mu ss}(c,\delta)^*\cap \mathcal{S}^{ss}(c,\delta)$
define the same closed  point in $M^{\mu ss}(c,\delta)^*$ if and only if
$$
(gr^{\mu}(E_1,\alpha_1))^{\dual\dual}=(gr^{\mu}(E_2,\alpha_2))^{\dual\dual}\ \ \ and\ \ \ l_{E_1}=l_{E_2}.
$$
\end{thm}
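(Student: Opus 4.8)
The plan is to treat the two invariants separately: the framed double dual $(gr^\mu(E,\alpha))^{\dual\dual}$ is controlled by restriction to curves together with the framing data along $D$, whereas the $0$-cycle $l_E$ is supported on $X\setminus D$, where the framing is vacuous, and is imported from the classical unframed Uhlenbeck theory. For the necessity direction, suppose $(E_1,\alpha_1)$ and $(E_2,\alpha_2)$ define the same closed point. Since $\pi$ is built from the full ring of $SL(V)$-invariant sections of powers of $\mathcal{L}(n_1,n_2)$, the two sheaves are in particular not separated by the curve-restriction sections produced in Lemma \ref{separate}. As Theorem \ref{Meh-Ram} ensures that $E_i|_C$ is semistable for a general $C\in|kH|$, the second assertion of Lemma \ref{separate} forces $(E_1|_C,\alpha_1|_C)$ and $(E_2|_C,\alpha_2|_C)$ to be $S$-equivalent on $C$, so by Remark \ref{S-eq} their graded objects agree. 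Choosing $C$ to avoid the finite non-locally-free loci of the $E_i$ and the supports of $l_{E_1},l_{E_2}$, the restricted $\mu$-Jordan--H\"older filtration of $E_i$ is an honest Jordan--H\"older filtration of $E_i|_C$ (using Theorems \ref{Meh-Ram} and \ref{Meh-Ram2} for the stability of the restricted factors), so $gr(E_i|_C)\cong(gr^\mu E_i)|_C\cong((gr^\mu E_i)^{\dual\dual})|_C$. Hence $G_1|_C\cong G_2|_C$, where $G_i:=(gr^\mu E_i)^{\dual\dual}$.

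To globalize I would use that for $k\gg0$ restriction induces an isomorphism $\Hom(G_1,G_2)\cong\Hom(G_1|_C,G_2|_C)$, both $H^0$ and $H^1$ of $\HOM(G_1,G_2)(-kH)$ vanishing; an isomorphism on the ample curve $C$ then lifts to $\psi\colon G_1\to G_2$ whose determinant cannot vanish on $C$ and is therefore nowhere zero, giving $G_1\cong G_2$. The framings require more, since a general $C$ meets the support $D$ of $\F$ only in the finite set $C\cap D$: here I would use that both $\alpha_i$ restrict to isomorphisms $G_i|_D\cong\F$, so that the framed structure is pinned down by $\alpha_i|_D$, and recover this datum from the $\mathcal{O}_{\PP}(n_2)$-component of the point, which records $\alpha$ near $D$; matching it upgrades $\psi$ to a framed isomorphism. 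For the cycle, equality of lengths is automatic from $l=c_2-c_2(G)$; to obtain $l_{E_1}=l_{E_2}$ as points of $S^l(X\setminus D)$ I would compare, over $X\setminus D$ where the framing plays no role, the polarization $\mathcal{L}(n_1,n_2)=\lambda_{\tilde\Q}(u_1)^{n_1}\otimes\mathcal{O}_{\PP}(n_2)$ with the determinant line bundle of the unframed Uhlenbeck construction (the $\mathcal{O}_{\PP}(n_2)$-factor being locally trivial off $D$), and invoke the classical result of Jun Li and \cite{HLbook} identifying the $0$-cycle.

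For sufficiency, assume $G:=G_1=G_2$ as framed sheaves and $l_{E_1}=l_{E_2}=:l$. The Rees degeneration attached to a $\mu$-Jordan--H\"older filtration exhibits $gr^\mu(E_i,\alpha_i)$ in the closure of the $SL(V)$-orbit of $(E_i,\alpha_i)$ inside $R^{\mu ss}(c,\delta)$; since invariant sections are constant on orbit closures, $\pi(E_i)=\pi(gr^\mu(E_i,\alpha_i))$, and I may assume each $E_i$ is $\mu$-polystable. Both then sit in $0\to E_i\to G\to Q_i\to0$ with $Q_i$ of length $l$ supported on the common cycle; the sheaves obtained this way from fixed $G$ and fixed cycle form a connected family (built from punctual Quot schemes), along which the classifying map to $M^{\mu ss}$ is constant. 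This is exactly, over $X\setminus D$, the unframed statement that a fixed double dual and a fixed $0$-cycle determine a single fiber of the Uhlenbeck morphism, the behaviour near $D$ being constant since there $E_i=G$. Hence $\pi(E_1)=\pi(E_2)$.

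The main obstacle is the $0$-cycle: unlike the double dual, it is invisible to generic curve restrictions, so its detection rests on comparing the ad hoc polarization $\mathcal{L}(n_1,n_2)$ with the classical Uhlenbeck polarization on $X\setminus D$. This comparison is precisely what the construction was designed to permit, the ratio $n_1/n_2$ having been rigged in \eqref{frac}, via \eqref{cong}, so that the restricted determinant bundles match; turning this into an equality of cycles, and separately pinning down the framing along $D$ from the $\mathcal{O}_{\PP}(n_2)$-data, are the two points that demand care.
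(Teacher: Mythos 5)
Your skeleton matches the paper's: reduce to the $\mu$-polystable case by a Rees degeneration along a $\mu$-Jordan--H\"older filtration, detect $(gr^\mu)^{\dual\dual}$ by restriction to general curves via Lemma \ref{separate} and a $\Hom$-globalization (this is exactly the paper's Lemma \ref{HLbook,Lem.8.2.12}), and treat the $0$-cycle through families built from the punctual Quot scheme $\Quot(\mathcal{E},l)\to S^lX$. But at the two decisive points your proposal replaces a proof by an appeal to the \emph{unframed} Uhlenbeck theory of Jun Li, and that appeal does not go through. There is no morphism from the framed picture to the unframed one: a framed $\mu$-semistable sheaf with $\delta_1>0$ admits subsheaves $E'\not\subset\ker\alpha$ of slope up to $\mu(E)+\delta_1(\tfrac{1}{\rk E'}-\tfrac1r)>\mu(E)$, so its underlying sheaf need not be $\mu$-semistable and defines no point of the unframed Uhlenbeck space; conversely, $M^{\mu ss}$ carries the ad hoc polarization of Definition \ref{def M-muss}, so separation and contraction statements proved for Jun Li's polarization cannot be transferred formally. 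Concretely: (1) in your sufficiency argument, connectedness of the family $\psi^{-1}(l_E)$ plus ``the unframed statement'' does not yield constancy of the classifying map --- constancy is precisely what must be proved, since a priori the ring of $\operatorname{SL}(V)$-invariant sections defining $M^{\mu ss}$ could separate points inside this fibre (note that such points are in general \emph{not} S-equivalent, so no orbit-closure argument helps either); (2) in your necessity argument, ``invoking the classical result identifying the $0$-cycle'' presupposes a comparison map that does not exist.

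What the paper does at these junctures is re-run the determinant-bundle arguments inside the framed setting. For sufficiency: formula \eqref{O(1)} computes the pullback of $\mathcal{O}_{M^{\mu ss}}(1)$ along the classifying map of the universal family on $Y_E=\psi^{-1}(l_E)$; since $\psi(Y_E)$ is a single cycle one gets $\lambda_{\mathbb{E}}(u_1)=\mathcal{O}_{Y_E}$ and the $\mathcal{O}_{\PP}$-factor pulls back trivially, so by \eqref{again O(1)} the pullback of the ample $\mathcal{O}_{M^{\mu ss}}(1)$ is trivial; combined with irreducibility of $Y_E$ (Ellingsrud--Lehn), the image must be a point. For necessity on cycles: the paper shows (Proposition \ref{HLbook,Prop.8.2.13}) that the induced map $j\colon S^lX\to M^{\mu ss}$ is a closed immersion, by feeding curve-restriction sections through the monomorphism \eqref{mono} to produce invariant sections whose zero loci on $\Quot(\mathcal{E},l)$ are multiples of $\psi^{-1}(\widetilde C)$, and using that the divisors $\widetilde C$, $C\in|aH|$, span a very ample system on $S^lX$. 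Your instinct that the ratio $n_1/n_2$ from \eqref{frac} and the isomorphism \eqref{cong} are ``what the construction was designed to permit'' is right, but the permission is used by carrying out these two computations with the framed line bundle $\mathcal{L}(n_1,n_2)$ itself, not by citing the unframed theorem; as written, both directions of your proof have a genuine gap at exactly these steps.
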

\begin{proof}
The proof goes along the same lines as that of \cite[Theorem 8.2.11]{HLbook}.
We start with the ``if'' part. Take any framed
sheaf $(E,\alpha)$ whose $S$-equivalence class belongs to $M^*$, that is $(E,\alpha)\in\mathcal{S}^{\mu ss}(c,\delta)^*\cap \mathcal{S}^{ss}(c,\delta)$, and consider the graded framed sheaf
$gr^{\mu}(E,\alpha)$ obtained from some $\mu$-Jordan-H\"older filtration of $(E,\alpha)$. Then
one can naturally construct a flat family $(\mathbf{E},\mathbf{A})$ of framed sheaves over
$\mathbb{A}^1$ such that

 i) $(E_t,\alpha_t)\cong(E,\alpha)$ for all
$0\ne t\in\mathbb{A}^1$ , and

ii) $(E_0,\alpha_0)\cong gr^{\mu}(E,\alpha)$.

\noindent The classifying morphism
$\Phi_{\mu}:\mathbb{A}^1\to M^{\mu ss}(c,\delta)^*$ factors into the composition
$\Phi_{\mu}:\mathbb{A}^1\overset{\Phi}\to M^*\overset{\gamma}\to M^{\mu ss}(c,\delta)^*$,
where $\Phi$ is the classifying morphism. By i) $\Phi(\mathbb{A}^1)$ is a point, hence also
$[(E,\alpha)]:=\Phi_{\mu}(E,\alpha)=\Phi_{\mu}(\mathbb{A}^1)$ is a point, and by ii) we have
$[(E,\alpha)]=[gr^{\mu}(E,\alpha)]$. It follows that it is enough to consider $\mu$-polystable
framed sheaves from $\mathcal{S}^{\mu ss}(c,\delta)^*$.

Thus, let $(E,\alpha)$ be a $\mu$-polystable framed sheaf from $\mathcal{S}^{\mu ss}(c,\delta)^*$.
Then $\mathcal{E}:=E^{\dual\dual}$ is $\mu$-polystable and locally free, and
there is an exact sequence
$$
0\to E\stackrel{can}{\relbar\joinrel\relbar\joinrel\rightarrow}\mathcal{E}\stackrel{\epsilon}{\relbar\joinrel\rightarrow} T\to0
$$
where
$T$ is a torsion sheaf with $l(T)=l_E$.
Furthermore, $E$ is locally free
along the framing curve $D$ by the definition of $\mathcal{S}^{\mu ss}(c,\delta)^*$,
hence there exists a morphism
$\alpha_D:\mathcal{E}|_D\to\mathcal{F}$
such that the framing $\alpha:E\to\mathcal{F}$ decomposes as
$$
\alpha\colon E \stackrel{\otimes\mathcal{O}_D}{\relbar\joinrel\relbar\joinrel\relbar\joinrel\rightarrow} E|_D
\cong \mathcal{E}|_D\stackrel{\alpha_D}{\relbar\joinrel\relbar\joinrel\rightarrow} \mathcal{F}.
$$
Consider the morphism
$\psi:{\rm Quot}(\mathcal{E},l)\to S^lX:
[\mathcal{E}\overset{\epsilon}\twoheadrightarrow T]\mapsto l_{E_\epsilon}$, where
$E_\epsilon:=\ker\epsilon$, and set
$$
{Y_E}:=\psi^{-1}(l_E).
$$
There is a universal exact triple
$$
0\to\mathbb{E}\to\mathcal{O}_{Y_E}\boxtimes\mathcal{E}\to\mathbb{T}\to0
$$
of families on $X$ parametrized by ${Y_E}$, where $\mathbb{T}$ is the family of artinian sheaves of length $l$ on $X$.
Let $p_1:{Y_E}\times X\to {Y_E}$ be the projection onto the first factor and set
$\widetilde{Y}_E:={\rm \mathbb{I}som}
(\mathcal{O}_{Y_E}\otimes V,p_{1*}(\mathcal{O}_{Y_E}\boxtimes\mathcal{E}(m)))
\overset{p_E}\to {Y_E}$
and
$\mathbb{E}_{\widetilde{Y}_E}:=(p_E\times id_X)^*\mathbb{E}$. Note that ${\widetilde{Y}_E}$ is a trivial
$GL(V)$-bundle on ${Y_E}$.
For any $w\in {\widetilde{Y}_E}$ we have a tautological epimorphism
$g_w:\mathcal{H}\to E_w:=\mathbb{E}_{\widetilde{Y}_E}|\{w\}\times X$.
By the universal property of $R^{\mu ss}(c,\delta)^*$ there is a well defined morphism
\begin{eqnarray*}
\Phi_{\widetilde{Y}_E} \colon  {\widetilde{Y}_E} & \to &  R^{\mu ss}(c,\delta)^*  \\ w &\mapsto &
(g_w, z)\ ,\ \ \mbox{where}\\
z &=&[\mathcal{H}\overset{g_w}\to E_w\overset{\otimes
\mathcal{O}_D}\longrightarrow E_w|_D\cong
\mathcal{E}|_D\overset{\alpha_D}\longrightarrow\mathcal{F}]\ ,
\end{eqnarray*}
and according to (\ref{morphism tau}) we have a commutative diagram
$$
\xymatrix{
{\widetilde{Y}_E}\ar[d]^{p_E} \ar[rr]^-{\Phi_{\widetilde{Y}_E}} &
& R^{\mu ss}(c,\delta)^*\ar[d]^-{\pi} \\
{Y_E}\ar[rr]^-{\Phi_{Y_E}} && M^{\mu ss}(c,\delta)^*,}
$$
where
\begin{equation}\label{Phi{Y_E}}\begin{array}{c}
\Phi_{Y_E}:\ {Y_E} \to  M^{\mu ss}(c,\delta)^*\,,\\[5pt]  y \mapsto [(E_y=\mathbb{E}|\{y\}\times X,\ \alpha_y:E_y\overset{\otimes\mathcal{O}_D}\longrightarrow E_y|_D\cong
\mathcal{E}|_D\overset{\alpha_D}\longrightarrow\mathcal{F})]
\end{array} \end{equation}
is the classifying morphism. From this diagram and formula (\ref{O(1)}) it follows that
\begin{equation}\label{again O(1)}
(\Phi_{Y_E}\circ p_E)^*\mathcal{O}_{M^{\mu ss}(c,\delta)^*}(1)\cong
(\lambda_{\mathbb{E}}(u_1)^{\otimes n_1}\otimes\operatorname{pr}^\ast\cO_{\PP}(n_2))^N,
\end{equation}
where $\operatorname{pr}:R^{\mu ss}(c,\delta)^*\to\mathbb{P}$ is the projection.
One shows that the right hand side of (\ref{again O(1)}) is trivial. In fact, since $\psi({Y_E})=l_E$ is a
point, it follows from the computations in \cite[Example 8.2.1]{HLbook} that
$\lambda_{\mathbb{E}}(u_1)=\mathcal{O}_{Y_E}$, hence
$\lambda_{\mathbb{E}_{\widetilde{Y}_E}}(u_1)=\mathcal{O}_{\widetilde{Y}_E}.$
On the other hand, the above diagram shows that
$\Phi_{\widetilde{Y}_E}^*\operatorname{pr}^\ast\cO_{\PP}(1)=\mathcal{O}_{\widetilde{Y}_E}.$
Whence (\ref{again O(1)}) yields
\begin{equation}\label{trivial}
(\Phi_{Y_E}\circ p_E)^*\mathcal{O}_{M^{\mu ss}(c,\delta)^*}(1)\cong\mathcal{O}_{\widetilde{Y}_E}.
\end{equation}
Note that ${Y_E}$ is irreducible (see, e.g., \cite{EL}) and $p_w:{\widetilde{Y}_E}\to {Y_E}$ is a trivial
principal bundle; hence ${\widetilde{Y}_E}$ is also an irreducible scheme. It follows now from
(\ref{trivial}) that $y=\Phi_{Y_E}({Y_E})$ is a point.
In particular, (\ref{Phi{Y_E}}) shows that $y=[(E,\alpha_E)]=[(E,\alpha_{E'})]$ which
proves the ``if''  part of the theorem.

The proof of the  ``only if'' part uses the restriction Theorem \ref{Meh-Ram2} and
 will require the  next Lemma and Proposition.

\begin{lemma}\label{HLbook,Lem.8.2.12}
Let $F_i=(E_i,\alpha_{E_i}),\ i=1,2,$ be framed $\mu$-semistable sheaves on $X$ such that $E_1$ and $E_2$ are locally
free along $D$. Let $a$ be a sufficiently large integer and $C\in|aH|$ a general smooth curve. Then
$F_1|_C$ and $F_2|_C$ are $S$-equivalent if and only if $(gr^\mu F_1)^{\dual\dual}=(gr^\mu F_2)^{\dual\dual}$.
\end{lemma}

\begin{proof}
Let $gr^\mu F_1=\bigoplus\limits_{i=1}^n(E_i/E_{i-1},\alpha_i)$ be the graded object of a
$\mu$-Jordan-H\"older filtration of $F_1$. According to Theorem \ref{Meh-Ram} one can choose $a$ large enough so that
the restriction of any summand $E_i/E_{i-1}$ is $\mu$-stable again. Now choose a $C$   that   avoids the
finite set of all singular points of the sheaves $E_i/E_{i-1}$ for all $i$. Then
$gr^\mu F_1|_C=(gr^\mu F_1)^{\dual\dual}|_C$ is the graded object of a $\mu$-Jordan-H\"older filtration of $F_1|_C$.
In view of Remark \ref{S-eq}, this shows that for a general curve $C$ of sufficiently high degree, $F_1|_C$ and
$F_2|_C$ are $S$-equivalent if $(gr^\mu F_1)^{\dual\dual}|_C\cong(gr^\mu F_2)^{\dual\dual}|_C$.
For $a>>0$ and $i=0,1$ we have
$$\Ext^i((gr^\mu F_1)^{\dual\dual},(gr^\mu F_2)^{\dual\dual}(-C))=
\Ext^i((gr^\mu F_2)^{\dual\dual},(gr^\mu F_1)^{\dual\dual}(-C))=0\,,$$  so that
$$\Hom((gr^\mu F_1)^{\dual\dual},(gr^\mu F_2)^{\dual\dual})
\cong\Hom((gr^\mu F_1)^{\dual\dual}|_C,(gr^\mu F_2)^{\dual\dual}|_C)\,.$$
This means that $(gr^\mu F_1)^{\dual\dual}|_C$ $\cong(gr^\mu F_2)^{\dual\dual}|_C$ if and only if
$(gr^\mu F_1)^{\dual\dual}\cong(gr^\mu F_2)^{\dual\dual}$.
\end{proof}

From this Lemma and the second claim in Lemma \ref{separate} it follows that if
$(gr^\mu F_1)^{\dual\dual}\not\cong(gr^\mu F_2)^{\dual\dual}$
then any two points $y_1$ and $y_2$ in $R^{\mu ss}(c,\delta)$ representing $F_1$ and $F_2$
are separated by $\operatorname{SL}(V)$-invariant sections of
$\mathcal{L}(n_1,n_2)^{\otimes\nu Nk}$ for some $\nu>0$, where
$\mathcal{L}(n_1,n_2)^{\otimes\nu Nk}=(\gamma\circ\pi)^*\mathcal{O}_{M^{\mu ss}}(\nu)$
(see Definition \ref{def M-muss}). This means that $\gamma(y_1)\ne\gamma(y_2)$. We thus consider the case
$(gr^\mu F_1)^{\dual\dual}\cong(gr^\mu F_2)^{\dual\dual}=:\mathcal{E}$ but $l_{F_1}\ne l_{F_2}$.

As we have seen, $\gamma$ is constant on the fibres of the morphism
$\psi:{\rm Quot}(\mathcal{E},l)\to S^lX, [\mathcal{E}\overset{\epsilon}\twoheadrightarrow T]\mapsto l_{E_\epsilon}$.
As $\psi$ is surjective and $S^lX$ is normal, $\gamma|_{{\rm Quot}(\mathcal{E},\ l)}$ factors through a morphism
$j:S^lX\to M^{\mu ss}$. The proof of Theorem is complete if we can show the following proposition.

\begin{prop}\label{HLbook,Prop.8.2.13}
The morphism $j: S^lX\to M^{\mu ss}$ is a closed immersion.
\end{prop}
\begin{proof}
It is well known that, for a smooth curve $C\in|aH|$, the subset $$\{Z\in S^lX\ |\ {\rm Supp}Z\cap C\ne\emptyset\}$$
of $S^lX$ has a structure of an ample irreducible reduced Cartier divisor which we will denote by $\widetilde{C}$.
Consider the above quoted morphism $\psi:{\rm Quot}(\mathcal{E},l)\to S^lX$ associated with the family $\mathbb{T}$.
Apply the argument from the proof of Lemma 8.2.15 in
\cite{HLbook} to formula (\ref{mono}) with $S={\rm Quot}(\mathcal{E},l)$ . Since $\mathcal{E}|_C$ is $\mu$-polystable,
it follows that there exists an integer $\nu>0$ and a
section $\sigma\in H^0(Y_C,\mathcal{L}'_0(n_1,n_2k)^{\otimes\nu})^{\operatorname{SL}(V_C)}$ such that the zero divisor
of $s_{\mathcal{E}}(\sigma)$ is a multiple of $\psi^{-1}(\widetilde{C})$. This implies that
$\sigma$ induces a section $\sigma'$ of some tensor power of $\mathcal{O}_{M^{\mu ss}}(1)$
such that the zero scheme of the section $j^*(\sigma')$ is a multiple of $\widetilde{C}$. The divisors
$\widetilde{C}$ span a very ample linear system on $S^lX$ as $C$ runs through all smooth curves in the linear system
$|aH|$ for $a$ large enough. Hence $j$ is an embedding.
\end{proof}

This finishes the proof of Theorem \ref{idinst}.
\end{proof}

From this Theorem
we obtain a set-theoretic stratification of the Uhlenbeck-Donaldson compactification.
\begin{corol}\label{M*}
Let $c=(r,\xi,c_2)$ be a numerical K-theory class and let
$M^{\mu\mbox{\scriptsize -}\mathrm{poly}}(r,\xi,c_2,\delta)^\ast \subset M^{\mu ss}(c,\delta)^*$
denote the subset corresponding to $\mu$-polystable locally-free sheaves.
Assume, as before, that $\delta_1<r\deg D$. One has
the following set-theoretic stratification:
\begin{equation*}
M^{\mu ss}(c,\delta)^*=\underset{l\ge0}\coprod M^{\mu\mbox{\scriptsize -}\mathrm{poly}}(r,\xi,c_2-l,
\delta)^\ast\times S^l(X\setminus D).
\end{equation*}
\end{corol}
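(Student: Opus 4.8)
The plan is to realize the asserted decomposition as an explicit bijection built directly from Theorem \ref{idinst}. First I would define the map
$$\Theta\colon M^{\mu ss}(c,\delta)^*\longrightarrow\coprod_{l\ge0}M^{\mu\mbox{\scriptsize -}\mathrm{poly}}(r,\xi,c_2-l,\delta)^\ast\times S^l(X\setminus D),\qquad [(E,\alpha)]\mapsto\big((gr^{\mu}(E,\alpha))^{\dual\dual},\,l_E\big).$$
The target is well formed: $(gr^{\mu}E)^{\dual\dual}$ is $\mu$-polystable and locally free, has the same rank $r$ and first Chern class $\xi$ as $E$, and has second Chern class $c_2-l$ with $l=c_2(E)-c_2((gr^{\mu}E)^{\dual\dual})$, so it is a point of $M^{\mu\mbox{\scriptsize -}\mathrm{poly}}(r,\xi,c_2-l,\delta)^\ast$; since $(E,\alpha)\in\mathcal{S}^{\mu ss}(c,\delta)^*$ it is locally free along $D$ with isomorphic induced framing, so it indeed lies in the starred locus. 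The second component $l_E$ lies in $S^l(X\setminus D)$ rather than $S^lX$ because the standing hypothesis $\delta_1<r\deg D$ forces $gr^{\mu}E$ to be torsion-free and locally free in a neighborhood of $D$, so the colength sheaf $(gr^{\mu}E)^{\dual\dual}/gr^{\mu}E$ is supported away from $D$. Well-definedness of $\Theta$, i.e.\ independence of the chosen $\mu$-Jordan--H\"older filtration, is precisely the invariance recorded just before Theorem \ref{idinst}.

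Injectivity of $\Theta$ is then immediate, being nothing but Theorem \ref{idinst}: two framed sheaves in $\mathcal{S}^{\mu ss}(c,\delta)^*$ define the same point of $M^{\mu ss}(c,\delta)^*$ if and only if their images under $\Theta$ agree.

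For surjectivity I would argue by an elementary-modification construction. Given a pair $(\mathcal{E}',Z)$ in the target with $l=\deg Z$, I write $\mathcal{E}'=\bigoplus_i(\mathcal{E}'_i,\alpha_i)$ as a direct sum of its framed $\mu$-stable factors, distribute the zero-cycle $Z\subset X\setminus D$ among the summands, and on each summand perform an elementary modification along its part of $Z$, namely set $E_i:=\ker(\mathcal{E}'_i\twoheadrightarrow T_i)$ with $T_i$ a torsion sheaf of length $\deg Z_i$ supported on $Z_i$. Put $E:=\bigoplus_iE_i$. Since every modification occurs off $D$, the sheaf $E$ is locally free near $D$ and $E|_D\cong\mathcal{E}'|_D$, so $\alpha_D$ induces a framing $\alpha$ on $E$ with $\alpha|_D$ an isomorphism; as an elementary modification preserves rank and $c_1$, hence the slope and the value $\varepsilon$, and occurs away from $D$, the pair $(E,\alpha)$ stays framed $\mu$-semistable and lies in $\mathcal{S}^{\mu ss}(c,\delta)^*$. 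Moreover each $E_i$ is a full-rank subsheaf of the framed $\mu$-stable $\mathcal{E}'_i$ with framing unchanged near $D$, hence is itself framed $\mu$-stable, so $gr^{\mu}(E,\alpha)\cong\bigoplus_i(E_i,\alpha_i)$, giving $(gr^{\mu}E)^{\dual\dual}\cong\mathcal{E}'$ and $l_E=Z$ by construction. Thus $\Theta([(E,\alpha)])=(\mathcal{E}',Z)$.

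The hard part will be this surjectivity step, and within it the verification that the graded object of the constructed $E$ — and not merely $E^{\dual\dual}$ — recovers $\mathcal{E}'$; this is exactly what forces the summand-by-summand construction and requires the stability lemma that an elementary modification of a framed $\mu$-stable sheaf performed off $D$ remains framed $\mu$-stable. The hypothesis $\delta_1<r\deg D$ is essential throughout, since (as noted before Theorem \ref{idinst}) it removes the torsion in the graded object that would otherwise be concentrated on $D$, keeping the cycle in $S^l(X\setminus D)$ and making the bookkeeping of $c_2$ and colength consistent. Well-definedness and injectivity, by contrast, are delivered directly by Theorem \ref{idinst} together with the invariance of $(gr^{\mu}E)^{\dual\dual}$ and $l_E$.
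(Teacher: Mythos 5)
Your proposal is correct and follows essentially the route the paper intends: the paper states the corollary as an immediate consequence of Theorem \ref{idinst}, which supplies exactly your well-definedness and injectivity of $\Theta$, while your surjectivity argument via elementary modifications of the $\mu$-stable summands away from $D$ is the standard step the paper leaves implicit. Nothing in your argument diverges from or conflicts with the paper's reasoning.
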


\begin{rema}
By our definition, $M^{\mu ss}=M^{\mu ss}(c,\delta)$, $M^{\mu ss*}=M^{\mu ss}(c,\delta)^*$ and $\gamma$ depend on the
choice of $m\ge m_0$, a positive integer used in the definition of the vector space $V$.
So it is more natural to denote them $M^{\mu ss}_m,\ M^{\mu ss*}_m$ and $\gamma_m$.
As shows Corollary \ref{M*}, $M^{\mu ss}(c,\delta)^*=M^{\mu ss*}_m$ does not depend on $m$ at least set-theoretically.
To obtain a Uhlenbeck-Donaldson type compactification $M^{\mu ss}$ of
$M^{\mu\mbox{\scriptsize -}\mathrm{poly}}(r,\xi,c_2,\delta)^\ast$, which is  a projective scheme
independent of $m$, one can proceed as follows.

Consider the sequence of morphisms $\{\gamma_m:M\to M_m^{\mu ss}\}_{m\ge m_0}$.
Define inductively a new series of morphisms $\{\gamma_{(k)}:M\to M_{(k)}\}_{k\ge0}$ as follows.
For $k=0$ set $M_{(0)}:=M^{\mu ss}_{m_0}$ and $\gamma_{(0)}:=\gamma_{m_0}:M\to M_{(0)}$.
Now, for $k\ge0$, assume that the scheme $M_{(k)}$ and a regular birational morphism $\gamma_{(k)}:M\to M_{(k)}$ are
already defined. Consider the morphism
$\gamma_{(k+1)}:=(\gamma_{(k)},\gamma_{m_0+k+1}):M\to M_{(k)}\times M_{m_0+k+1}^{\mu ss})$
and let $M_{(k+1)}$ be the scheme-theoretic image of the morphism $\gamma_{(k+1)}$ (in the usual sense of
\cite[II, Ex. 3.11(d)]{H}), together with a regular birational projection $\delta_{k+1}:M_{(k+1)}\to M_{(k)}$ such that
$\gamma_{(k)}=\delta_k\cdot\gamma_{(k+1)}$. We thus obtain for any $k\ge1$ a decomposition of the birational  morphism
$\gamma_{(0)}:M\to M_{(0)}$ into the composition
$$
\gamma_{(0)}=\delta_1\cdot...\cdot\delta_k\cdot\gamma_{(k)},\ \ \ k\ge1.
$$
As $\gamma_{(0)}$ is a birational projective morphism, it follows that there exists an integer $k_0$ such that
$\gamma_k=\gamma_{k_0}$ and $\delta_k=id$ for $k\ge k_0$. We now define the space $M^{\mu ss}$ and, respectively, the
morphism $\gamma:M\to M^{\mu ss}$ as
$$
M^{\mu ss}=M^{\mu ss}(c,\delta):=M_{(k_0)},\ \ \ \ \gamma:=\gamma_{k_0}:M\to M^{\mu ss}.
$$
These definitions do not depend on $m$.
\end{rema}

\bigskip\section{Concluding remarks}
Let $X$ be a smooth projective surface, and let $D$ be a big
and nef irreducible divisor in $X$.
Let  $E_D$ be a  locally-free sheaf on $D$ such that
there exists a real number $A_0$, $0\leq A_0< \frac1r D^2$ with the following property: for any
locally-free subsheaf $ F\subset  E_D$ of constant positive rank, one has $\frac{1}{\rk F}\deg c_1(F) \leq \frac{1}{\rk E_D}\deg c_1(E_D)+A_0$. Considering $E_D$ as a sheaf on $X$,
we say that a framed sheaf $(E,\alpha\colon E \to E_D)$ is $(D,E_D)$-framed
if $(E,\alpha)$ satisfies the condition of the definition of
$\mathcal{S}^{\mu ss}(c,\delta)^*$, that is $E$ is locally free along $D$
and $\alpha_{\vert D}$ is an isomorphism between $E_{\vert D}$ and $E_D$.
It was shown in \cite{DimaUgo} that for any $c\in H^*(X,\QQ)$ there exists an ample divisor $H$ on $X$ and
a real number $\delta>0$ such that all the $(D,E_D)$-framed sheaves
$\E$ on $X$ with Chern character $\ch(\E)=c$ are $(H,\delta)$-stable. As a consequence,
one has a moduli space for  $(D,E_D)$-framed sheaves on $X$, which embeds as an open
subset into the moduli space of stable pairs. These moduli spaces have been
quite extensively studied in connection with instanton counting and Nekrasov partition
functions (see \cite{NY,BPT,GaLiu}  among others).

 Let us in particular consider the open subset
formed by locally-free $(D,E_D)$-framed sheaves on $X$. By restricting
the previous construction to this open subset we construct a Uhlenbeck-Donaldson
partial compactification for it (we call this ``partial'' because the moduli
space of slope semistable framed bundles is not projective in general in this case). This generalizes the construction done by Nakajima, using ADHM data,  when $X$ is the complex projective plane.
An extension to a general projective surface
was hinted at in \cite{NY-lect} but was not carried out.

\bigskip

\frenchspacing

\end{document}